\newtheorem{Theorem}{Theorem}
\newtheorem{Lemma}{Lemma}
\newtheorem{Definition}{Definition}
\newtheorem{Corollary}{Corollary}
\theoremstyle{definition}
\newtheorem{Remark}{Remark}
\theoremstyle{plane}
\def \beq{ \begin{equation} }
\def \eeq{\end{equation}}
\title{The Vlasov-Poisson equation, the M\"{o}bius geometry and the $n$-body
    problem in a negative space form}
\begin{document}

\maketitle

\markboth{Ortega Pedro Pablo and  Reyes Victoria J. Guadalupe}{The Vlasov-Poisson equation, M\"{o}bius geometry and the $n$-body problem}

\vspace{-0.5cm}

\author{
\begin{center}
{\rm PEDRO PABLO ORTEGA PALENCIA \\
         Departamento de Matem\'aticas  \\
         Universidad de Cartagena \\
         Cartagena de Indias, \\
         COLOMBIA\\
         {\tt portegap@unicartagena.edu.co}\\
        \medskip
         J. GUADALUPE REYES VICTORIA \\
         Departamento de Matem\'aticas \\
         UAM-Iztapalapa \\
         M\'exico, D.F. \\
         MEXICO \\
         {\tt revg@xanum.uam.mx}}
\end{center}}

 \bigskip
\begin{center}
\today
\end{center}



\begin{abstract}
By using, the Vlasov-Poisson equation defined in either a Riemannian or a semi-Riemannian space $\mathbb{R}^k_g$,  and a Dirac distribution function, 
we re-obtain the well known and classical equations of motion of a mechanical system with a pairwise acting potential function. 
We apply this result to the study of an  $n$--body problem in a two dimensional negative space form  with the hyperbolic cotangent potential. 
Following the  Klein's geometric Erlangen program, with methods of M\"{o}bius geometry and using the Iwasawa decomposition of the M\"{o}bius isometric group $SL(2,\mathbb{R})$ via its representation in one Clifford Algebra, we  complete the study of the whole set of M\"{o}bius solutions (relative equilibria)
of the problem begun by Diacu {\it et al.} in \cite{Diacu8}.
\end{abstract}

\newpage

\section{Introduction}
\label{sec:intro}

The relevance of modern mathematics to physical problems, which require several branches of mathematics for their solution, is ever increasing nowadays.
Among those physical problems, is the problem of generalizing the Newton gravitational equations for spaces with non-zero Gaussian 
curvature as Lobachevsky and Boljai tried in the 19th century (see \cite{Diac}). In recent years, the problem has been revived and a series of papers have been published using suitable potentials that generalize the Newtonian one.  In these, various interesting solutions from the geometric point of view have been obtained.  Even though the methods for obtaining the equations of motion in each curved $n$-body problem has been worked out usging particular methos for each case, and only for constant Gaussian curvature (see \cite{Diac}, \cite{Diacu8}, and \cite{Perez}), all the equations of motion obtained converge, in suitable systems of intrinsic coordinates, to the classical and well known equations (\ref{eq:equations-curved-motions}) (see \cite{Abraham} and \cite{Einstein} ),
which allow to compare the geometry of the space (geodesic curves) with the dynamics of the physical problem (gradient, in the metric, of the potential).

In this paper, by using, the classical Vlasov-Poisson 
equation defined in a space  $\mathbb{R}^k_G$ (Riemannian or semi-Riemannian) and a Dirac distribution function defined on positions and velocities, we re-obtain the equations of motion of the $n$-body problem, where the particles are moving under the action of a pairwise acting  potential. The aim of doing this is to emphasize the hierarchical position of this partial differential equation (see \cite{Dulov}), and to develop in the future suitable methods for studying questions related to the $n$-body problem on an arbitrary manifold for a large number of interacting mass-particles.
 
We apply this result for studying the motion of $n-$ point masses in a negative space form (space of constant Gaussian curvature; see \cite{DoCarmo}) moving under the influence of a gravitational potential that naturally extends Newton's law as Kozlov did in \cite{Kozlov}, and Diacu {\it et al} in 
\cite{Diac} and \cite{DiacuPerez}. This work completes the earlier study of this problem done by Florin Diacu, Ernesto P\'erez-Chavela and J.\ Guadalupe Reyes Victoria in  \cite{Diacu8}. To do this complementation  
with the help of algebraic methods  we formally define the Killing vector fields in $\mathbb H^2_R$ associated
to the true conic motions, through the representation of its isometric group $SL(2,\mathbb{R})$ into a suitable Clifford Algebra.

For an interesting history of this $n$-body problem, the reader is referred to the seminal papers of Florin Diacu, {\it et al}  \cite{Diacu1}, \cite{DiacuPerez}  and   \cite{Diac}. 

\smallskip

The paper is organized as follows.

\smallskip

In section \ref{sec:Vlasov-equamotion}, we recall the Vlasov-Poisson equation (VP) in  a general
non-euclidian space and obtain from it the equations of motion for a general mechanical system with a general pairwise acting potential. We apply this result to directly obtain the equation of motion for the $n-$body problem in $\mathbb{H}^2_R$ with a hyperbolic cotangent potential.

 In section \ref{sec:invariant-Moebius}, following the Klein's Erlangen program (see \cite{Kisil}),
 we work out the representation of the M\"{o}bius group of isometries $SL(2,\mathbb{R})$ of $\mathbb{H}^2_R$ into a four dimensional Clifford algebra. This allows us to factorize such group, via the  Iwasawa decomposition Theorem, 
by one-dimensional subgroups, in three different suitable ways, which define three different geometries: the elliptic, the parabolic and the hyperbolic (see \cite{Kisil}). Next, we obtain from  the Lie algebra $sl(2,\mathbb{R})$, when we project it via the exponential map onto the Lie group $SL(2,\mathbb{R})$,  the possible five Killing vector fields associated to all one-parameter  subgroups, which are the factors of such decomposition. These vector fields  define the M\"{o}bius solutions (also called {\it relative equilibria} in the dynamic literature) of the problem.

In section \ref{sec:Klein_model}, by using the same decomposition, we proceed to find the M\"{o}bius solutions to the problem in $\mathbb{H}^2_R$, by the method of matching the gravitational field with each one of the Killing vector fields obtained in section \ref{sec:invariant-Moebius} as it was done before in \cite{Diacu8}, \cite {Perez} and \cite{Perez-Reyes}. In subsections \ref{subsec:hyperbolic-relative eq}, \ref{subsec:parabolic-relative eq} and
\ref{subsec:elliptic-relative eq} we obtain functional algebraic conditions for the existence of the hyperbolic normal M\"{o}bius solutions, the  parabolic nilpotent M\"{o}bius solutions and the  elliptic cyclic M\"{o}bius solutions respectively. We do the same for the parabolic cyclic M\"{o}bius solutions in subsection \ref{subsec:parabolic-cyclic-ic-relative eq}, and for the hyperbolic cyclic M\"{o}bius solutions in subsection \ref{subsec:hyperbolic-cyclic-ic-relative eq}, and we prove here that, in this problem, there are no solutions to any of these two types.

We note that the non-existence of the aforementioned types of true conic orbits (parabolic cyclic and hyperbolic cyclic) for a two-dimensional negative space form is not a surprise,  given the fact in the parabolic euclidian geometry some geometrical features are richer (see \cite{Kisil}).


\section{Vlasov-Poisson equation and equations of motion for the $n$-body problem}\label{sec:Vlasov-equamotion}

In an euclidian space $\mathbb{R}^k$ with coordinates $x=(x^1,x^2, \cdots, x^k)$, the Vlasov-Poisson equation or equation of self-consistent field has the form (see \cite{Dulov}),
\begin{equation}\label{eq:Vlasov}
\frac{\partial F}{\partial t} + \left<v, \frac{\partial F}{\partial x} \right> +
\left<f(F), \frac{\partial F}{\partial v} \right>= 0,
\end{equation}
where $f$ is a functional of the distribution function $F$ of  particles  moving along the space $\mathbb{R}^k$
with velocities $v=(v^1,v^2, \cdots, v^k)$.

A simple kind of force $f$ for the Vlasov-Poisson equation (\ref{eq:Vlasov}) is given by
\begin{equation}\label{eq:force}
f(x,t)=-\nabla \left(\int \, U(x,y)\, F(y,v,t) \, dv dy \right),
\end{equation}
where $U=U(x,y)$ defines an interactive pairwise potential.

For the non-euclidean space $\mathbb{R}^k_g$  endowed with the Riemannian or Semiriemannian metric $g=(g_{ij})$, which we will call from now on 
simply metric, we consider the corresponding Vlasov-Poisson equation (see \cite{Dulov}),
\begin{equation}\label{eq:curved-Vlasov}
\frac{\partial F}{\partial t} + \left<v, \frac{\partial F}{\partial x} \right>_{g} +
\left<f(F), \frac{\partial F}{\partial v} \right>_{g}= 0,
\end{equation}
where $< \, , \, >_{g}$ denotes the scalar product in the given metric. We denote by $ \nabla_{z}^g $ the gradient with respect to the variable $z$ in the metric, and by $\displaystyle \frac{D}{dt}$  the corresponding covariant derivative.

\smallskip

 For one system of $n$-particles moving along the positions $X^i(t)$ and velocities $V^i(t)$ in the 
$k-$dimensional space $\mathbb{R}^{3}_g$ endowed with the metric $g=(g_{ij})$ we obtain the equations of motion for the system associated to the Vlasov-Poisson equation (\ref{eq:curved-Vlasov}).

\begin{Theorem}\label{Theo:equations-curved-motions} The  solutions of one system of $n$ point particles with weights (masses) $m_1, m_2, m_3, \cdots, m_n$  moving under the influence of the pairwise acting potential $U=U(x,y)$, along the positions $X^i(t)$ and velocities $V^i(t)$ in the  space $\mathbb{R}^k_g$,  satisfy the system of $n$ second order differential equations,
\begin{equation}\label{eq:curved-motion}
\frac{D V^i}{d t} =  -  \sum_{j=1}^n \, m_j \nabla_{x}^g   U(X^i,X^j).
\end{equation}
for $ i=1,\dots,n$.
\end{Theorem}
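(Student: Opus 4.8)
The plan is to insert into the curved Vlasov-Poisson equation (\ref{eq:curved-Vlasov}) a distribution function $F$ concentrated on the phase-space trajectory of the $n$ particles, namely the Dirac ansatz
\[
F(x,v,t)=\sum_{i=1}^n m_i\,\delta\bigl(x-X^i(t)\bigr)\,\delta\bigl(v-V^i(t)\bigr),
\]
and to read off the equations of motion as exactly the conditions that make this $F$ a (weak) solution. First I would evaluate the force functional (\ref{eq:force}) on this ansatz: the two integrations against $F(y,v,t)\,dv\,dy$ collapse the $y$- and $v$-deltas, leaving
\[
f(x,t)=-\nabla_{x}^g\sum_{j=1}^n m_j\,U\bigl(x,X^j(t)\bigr),
\]
which already isolates the right-hand side of (\ref{eq:curved-motion}).

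Next I would substitute the ansatz into (\ref{eq:curved-Vlasov}) and test the equation against an arbitrary smooth, compactly supported function $\varphi(x,v)$ on phase space, integrating over $(x,v)$. Using $\partial_t\,\delta(x-X^i(t))=-\dot X^i\cdot\partial_x\,\delta(x-X^i(t))$ and the analogous identity for the $v$-delta, and then integrating by parts to move all derivatives onto $\varphi$, each term reduces to an evaluation of $\varphi$ or its derivatives at $(X^i,V^i)$. Collecting the coefficient of each independent combination of derivatives of $\varphi$ and invoking the arbitrariness of $\varphi$ forces, for every $i$, the two characteristic relations: the kinematic identity $\dot X^i=V^i$ coming from the streaming term $\langle v,\partial F/\partial x\rangle_g$, and the dynamic relation equating $\dot V^i$ with the force $f(X^i,t)$ coming from the acceleration term $\langle f(F),\partial F/\partial v\rangle_g$.

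The delicate step, and the one I expect to be the main obstacle, is showing that the bracketed streaming and acceleration operators in the metric $g$ convert the naive time derivative $\dot V^i$ into the covariant derivative $DV^i/dt$. In the non-euclidean setting the transport term $\langle v,\partial_x F\rangle_g$ is the geodesic spray rather than the flat directional derivative, so its characteristics carry the Christoffel contribution $\Gamma^i_{jk}V^jV^k$; when this is combined with $\dot V^i=f(X^i,t)$ the left-hand side reorganizes precisely into $DV^i/dt=\dot V^i+\Gamma^i_{jk}\dot X^j V^k$. I would make this rigorous by writing (\ref{eq:curved-Vlasov}) in local coordinates with the metric-induced connection explicit, verifying that the distributional computation is coordinate-independent so that it descends to the manifold $\mathbb{R}^k_g$, and then reading off (\ref{eq:curved-motion}). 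Combining the force computation of the first step with this identification yields $DV^i/dt=-\sum_{j=1}^n m_j\,\nabla_{x}^g U(X^i,X^j)$, as claimed.
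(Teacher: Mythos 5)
Your proposal is correct and rests on the same core device as the paper: insert the Dirac ansatz \eqref{eq:Dirac-distribution} into the curved Vlasov--Poisson equation, collapse the double integral in the force functional \eqref{eq:force} to get $f=-\sum_j m_j\nabla^g_x U(\cdot,X^j)$, and read off the characteristic relations $\dot X^i=V^i$ and the dynamic equation as the conditions making $F$ a solution. Where you differ is in rigor and in where the geometry enters. The paper works pointwise with products of deltas and their derivatives: it computes $\partial F/\partial t$ by the chain rule, \emph{inserting the covariant derivative $-DV^i/dt$ by fiat} in the differentiation of $\delta(v-V^i(t))$, then matches the first term of \eqref{eq:condition-Vlasov} against the last and the second against the third to obtain a sufficient condition, which yields \eqref{eq:curved-motion}. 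You instead pass to the weak formulation, test against compactly supported $\varphi(x,v)$, integrate by parts, and extract the characteristics from the arbitrariness of $\varphi$ --- the distributionally honest version of the same matching argument --- and, crucially, you identify the one genuinely delicate point the paper glosses over: the conversion of $\dot V^i$ into $DV^i/dt$ requires the transport term to be the geodesic spray, carrying the Christoffel contribution $\Gamma^i_{jk}v^jv^k\,\partial_{v^i}$, which is not visible in the paper's form \eqref{eq:curved-Vlasov} of the equation and must be read into the notation $\langle v,\partial F/\partial x\rangle_g$. Your route buys a coordinate-independent, mathematically defensible derivation at the cost of having to make that interpretation of \eqref{eq:curved-Vlasov} explicit; the paper's route is shorter but leaves both the meaning of pointwise delta products and the provenance of the covariant derivative unexamined.
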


\begin{proof} We consider, for  the force function (\ref{eq:force}), the particular case when
the distribution function of particles moving under the influence of the potential $U=U(x,y)$ is,
\begin{equation}\label{eq:Dirac-distribution}
F(t,v,x)=\sum_{i=1}^n \, m_i  \delta(v-V^i(t)) \, \delta(x-X^i(t)),
\end{equation}
where $\delta=\delta (x)$ is the ordinary Dirac-function, and $X^i(t)$, $V^i(t)$ are time dependent vector functions
which locate the position and velocity of the $i$-th particle.

\smallskip

The following chain of  results hold.

For a fixed $i = 1,2,3, \cdots$ we have,  by using the properties of the Dirac function in the force,
\begin{eqnarray}\label{eq:force-xi}
f(t,x) &=& - \nabla_{x}^g  \int U(x,y) F(t,v,y) dy \, dv  \nonumber \\
&=& - \nabla_{x}^g  \int U(X^i,y) \sum_{j=1}^n \, m_j  \delta(v-V^j(t)) \, \delta(y-Y^j(t)) dy \, dv  \nonumber \\
&=& -  \sum_{j=1}^n \, m_j \nabla_{x}^g  \int U(X^i,y)   \delta(v-V^j(t)) \, \delta(y-Y^j(t)) dy \, dv  \nonumber \\
&=& -  \sum_{j=1}^n \, m_j \nabla_{x}^g   U(X^i,X^j),   \nonumber \\
\end{eqnarray}
where we have used $x=X^i(t)$, $v=V^i(t)$ and the corresponding integral has value 1.

By a straight forward computation we obtain,
\begin{eqnarray}\label{eq:partial-t}
\frac{\partial F}{\partial t} &=&  \sum_{i=1}^n \, m_i \left<\nabla_{v}^g \delta (v-V^i(t)), -\frac{D V^i }{d t} \right>_g \delta(x-X^i(t))  \nonumber \\
 &+& \sum_{i=1}^n \, m_i \left< \nabla_{x}^g \delta (x-X^i(t)), -\dot{X^i} \right>_g \delta(v-V^i(t)) \nonumber \\
&=&  \sum_{i=1}^n \, m_i \left[\left<\nabla_{v}^g \delta (v-V^i(t)), -\frac{D V^i}{d t} \right>_g
 + \left<\nabla_{x}^g \delta (x-X^i(t)), -\dot{X^i} \right>_g  \right], \nonumber \\
\end{eqnarray}
where we have used again that $x=X^i(t)$, $v=V^i(t)$.

On the other hand,
\begin{eqnarray}\label{eq:product-Fx-v}
\left<v, \frac{\partial F}{\partial x} \right>_g &=& \left<v,
\sum_{i=1}^n \, m_i \delta (v-V^i(t)) \, \nabla_{x}^g \delta (x-X^i(t)) \right>_g  \nonumber \\
&=& \sum_{i=1}^n \, m_i \, \left<v,  \, \nabla_{x}^g \delta (x-X^i(t)) \right>_g,  \nonumber \\
\end{eqnarray}
for the case when $v=V^i(t)$.

Finally, we have that,
\begin{eqnarray}\label{eq:product-Fv-f}
& & \left<f(F), \frac{\partial F}{\partial v} \right>_g    \nonumber \\
 &=& \left< - \sum_{j=1}^n \, m_j \nabla_{x}^g   U(X^i,X^j),
\sum_{i=1}^n \, m_i \delta (x-X^i(t)) \, \nabla_{v}^g \delta (v-V^i(t)) \right>_g \nonumber \\
&=& \sum_{i=1}^n \, m_i \, \left< - \sum_{j=1}^n \, m_j \nabla_{x}^g   U(X^i,X^j),
  \, \nabla_{v}^g \delta (v-V^i(t)) \right>_g \nonumber \\
\end{eqnarray}
for the case when $x=X^i(t)$.

Therefore, when we substitute equations (\ref{eq:partial-t}), (\ref{eq:product-Fx-v}) and (\ref{eq:product-Fv-f}) and factorize $\displaystyle \sum_{i=1}^n \, m_i$ in equation (\ref{eq:curved-Vlasov}), a sufficient condition for this equation to hold is that 
\begin{eqnarray}\label{eq:condition-Vlasov}
0 &=& \left< \nabla_{v}^g \delta (v-V^i(t)), -\frac{D V^i}{d t} \right>_g
 + \left<\nabla_{x}^g \delta (x-X^i(t)), -\dot{X^i} \right>_g \nonumber \\
 &+&  \left<v,  \, \nabla_{x}^g \delta (x-X^i(t)) \right>_g + \left< -  \sum_{j=1}^n \, m_j \nabla_{x}^g   U(X^i,X^j), \, \nabla_{v}^g \delta (v-V^i(t)) \right>_g, \nonumber \\
 \end{eqnarray}
for all $i=1,2,\cdots ,n$.

Compare the first and last terms in the right hand side of equation (\ref{eq:condition-Vlasov}) and
the second and third ones.  If we put $\displaystyle v =\dot{X}^i$ as above then, for such a number to vanish, it is necessary that
\begin{equation}\label{eq:curved-motion-2}
\frac{D V^i}{d t} =  -  \sum_{j=1}^n \, m_j \nabla_{x}^g   U(X^i,X^j),
\end{equation}
which ends the proof.
\end{proof}

 If $x=(x^1, x^2, \cdots , x^k)$  is the system of coordinates for $\mathbb{R}^k_g$, and
$\displaystyle \{ \Gamma^l_{ij} \}$ is the Levi-Civita connection compatible with the metric then, for any pair of vectors $X$ 
and $V=\dot{X}$ in $\mathbb{R}^k_g$, the equalities
 \[ \left(\frac{D V}{d t}\right)^i = \left(\frac{D \dot{X}}{d t}\right)^i = \ddot{x}^i+\Gamma^i_{lj} \dot{x}^l \dot{x}^j, \]
and
 \[ \sum_{j=1}^n \, m_j \nabla_{x}^g   U(X^i,X^j)= m_j g^{ij} \frac{\partial U}{\partial x^j} \]
hold, where $g^{-1}=(g^{ik})$ is the inverse matrix for the metric $g$ and, using the Einstein notation, the summation runs over the upper and lower equal indices.
Thus, for this coordinate system, equations (\ref{eq:curved-motion}) can be written, without loss of generality, as
\begin{equation} \label{eq:equations-curved-motions}
 \ddot{x}^i+\Gamma^i_{lj} \dot{x}^l \dot{x}^j = - m_k g^{ik} \frac{\partial U}{\partial x^k}.
\end{equation}

\begin{Remark}\label{rem:geo-dyn}
We observe that the vanishing of the right hand side of equation
 (\ref{eq:equations-curved-motions}) defines the equations of geodesic curves associated to the metric $(g_{ij})$, whereas the right hand side corresponds to the curved (via the metric) gradient of the potential $U$. In other words, we are comparing geometry versus dynamics, as in the classical way (see \cite{Abraham}
and \cite{Einstein}).
\end{Remark}

\begin{Remark} We observe that if $g_{ij}= \delta_{ij}$ is the euclidian metric, then $\Gamma^i_{lj} \equiv 0$ and we obtain the classical Newtonian $n$-body problem.
\end{Remark}

\begin{Definition} A two dimensional negative space form is a smooth  connected surface with negative constant Gaussian curvature.
\end{Definition}

Since the Minding's Theorem (see \cite{DoCarmo}) states that all surfaces with the same constant Gaussian curvature K are locally isometric,
we apply the equations (\ref{eq:equations-curved-motions}) to obtain  the equations of motion of the $n$-body problem in the upper half complex plane 
\[\mathbb{H}^2_R = \{ w \in \mathbb{C} \, | \, {\rm Im} \,(w) >0 \} \]
endowed with the conformal Riemannian metric 
\begin{equation}\label{met-k}
   -ds^2= \frac{4 R^2}{(w-\bar{w})^2}dw d\bar{w},
\end{equation}
with conformal factor
\begin{equation}\label{met-k2}
   \mu (w, \bar{w})= \frac{4 R^2}{(w-\bar{w})^2},
\end{equation}
and with Gaussian constant curvature $\displaystyle K= - \frac{1}{R}$.

The equations of the geodesic curves are given by (see \cite{DoCarmo})
\begin{equation}\label{eq:geodesic-klein}
\ddot{w} - \frac{2   \dot{w}^2}{ w-\bar{w}} = 0,
\end{equation}
and they are either half circles orthogonal to the real axis $(y=0)$ or half lines perpendicular to it.  

The space $\mathbb{H}^2_R$ endowed with the metric (\ref{met-k}) is called the {\it Klein upper half plane} model for the hyperbolic geometry.
(Here we denote by $d_{kj}$ the hyperbolic distance between the points $w_k$ and $w_j$ in $\mathbb{H}^2_R$.)

Following \cite{Kozlov}, \cite{Diac} and  \cite{Diacu8}, for the $n-$body problem on $\mathbb{H}^2_R$ of a system of $n-$particles with respective
masses $m_1, m_2, \cdots , m_n$ and positions $w_1(t), w_2(t), \cdots , w_n(t)$, we use 
the acting hyperbolic cotangent potential in the coordinates $(w, \bar{w})$, given by
\begin{equation}\label{eq:potesfklein-0}
V_R (w, \bar{w})= \frac{1}{R} \sum_{1 \leq k < j \leq n}^n
m_k m_j \coth_R \left(\frac{d_{kj}}{R}\right).
\end{equation}

A straightforward computation shows that
\begin{equation}\label{eq:potesfklein}
V_R (w, \bar{w})= \frac{1}{R} \sum_{1 \leq k < j \leq n}^n
m_k m_j \frac{(\bar{w}_k+w_k)(\bar{w}_j+w_j)-2(|w_k|^2+|w_j|^2)}{[\Theta_{(k,j)}(w, \bar{w})]^{1/2}},
\end{equation}
where
\begin{equation}\label{eq:singularklein}
\Theta_{(k,j)}(w, \bar{w})
=[(\bar{w}_k+w_k)(\bar{w}_j+w_j)-2(|w_k|^2+|w_j|^2)]^2-(\bar{w}_k-w_k)^2(\bar{w}_j-w_j)^2
\end{equation}
defines the singular set of the problem.

A direct substitution
of equations (\ref{eq:geodesic-klein}) and (\ref{eq:potesfklein}) together  with the conformal factor of the metric 
(\ref{met-k}) in equation  (\ref{eq:equations-curved-motions}) shows that the solutions of the system
defining the $n$-body problem in the negative space form $\mathbb{H}^2_R$ with
acting hyperbolic potential (\ref{eq:potesfklein}), satisfy the system of $n$ second order differential
equations, $ k=1,\dots,n$,
\begin{eqnarray} \label{eq:motionkleingeo}
   \ddot{w}_k - \frac{2   \dot{w}_k^2}{ w_k-\bar{w}_k}
  &=& \frac{2}{\mu(w_k, \bar{w}_k)} \,
  \frac{\partial V_R}{\partial \bar{w}_k} =-\frac{( w_k- \bar{w}_k )^2}{2 R^2} \,
  \frac{\partial V_R}{\partial \bar{w}_k} \nonumber  \\
&=& -\frac{2(w_k-\bar{w}_k)^3}{R} \sum_{\substack{j=1\\ j\ne k}}^n \frac{m_j  (\bar{w}_j-w_j)^2(w_k-w_j)(\bar{w}_j-w_k)}{[\Theta_{(k,j)}(w, \bar{w})]^{3/2}}. \nonumber  \\
\end{eqnarray}

As we have pointed in Remark \ref{rem:geo-dyn}, the left hand side of equation (\ref{eq:motionkleingeo}) defines the equations of geodesic curves in
 $\mathbb{H}^2_R$, whereas the right hand side corresponds to the gradient (in the metric (\ref{met-k}))  of the potential $V_R$.


\section{Invariants of the M\"{o}bius Geometry}\label{sec:invariant-Moebius}

In this section we show the geometric invariants (conic curves) of the M\"{o}bius geometry of $\mathbb{H}^2_R$ according to 
 the Klein's Erlangen Program and with the methodology of representing the isometry M\"{o}bius group into a four dimensional Clifford algebra as  in \cite{Kisil}. These invariants will define five Killing vector fields on the hyperbolic half plane.

\smallskip

Let
\[ {\rm SL}(2,\mathbb{R}) = \{ A \in {\rm GL}(2,\mathbb{R}) \, | \, \det A=1 \},  \]
be the {\it special linear real 2-dimensional group}, which is a 3-dimensional simply connected, smooth real manifold. It is well known (see \cite{Dub})  that the {\it
group of proper isometries} of $\mathbb{H}^2_R$ is the projective quotient
group $\displaystyle {\rm SL}(2,\mathbb{R})/\{\pm I \}$. Every class
\[ A = \left(\begin{array}{cc}
    a  &  b     \\
    c  &  d   \\
    \end{array}\right)  \in {\rm SL}(2,\mathbb{R})/\{\pm I \} \]
also has an associated  unique  M\"obius
transformation  $ f_A : \mathbb{H}^2_R \to \mathbb{H}^2_R$, where
\[f_A (z) = \frac{a z +b}{c z + d}, \]
which satisfies $f_{-A} (z)= f_A (z)$.

\smallskip

We now give the representations of the ${\rm SL}(2,\mathbb{R})$ group in a Clifford algebra with two generators, knowing
that there are three different Clifford algebras $C\ell (e)$, $C\ell (p)$, $C\ell (h)$, corresponding to the elliptic, parabolic and hyperbolic cases.

A Clifford algebra $C\ell (\sigma)$ is a four dimensional linear real space spanned by
$\{1, e_0, e_1, e_0e_1 \}$ with the non-commutative product defined by, (see \cite{Kisil} for more details)
\begin{eqnarray}\label{eq:Clifford}
e_0^2 &=& -1, \nonumber \\
e_1^2 &=& \sigma= \left\{
\begin{array}{cc}
  -1  &  \mbox{for $C\ell (e)$}, \\
 0  &  \mbox{for $C\ell (p)$}, \\
  1   &  \mbox{for $C\ell (h)$} \\
\end{array}\right. \nonumber \\
e_0e_1 &= &-e_1e_0. \nonumber  \\
\end{eqnarray}

For the space $\mathbb{R}^2=\{ue_0 + ve_1 \, | \, u,v \in \mathbb{R} \}$, we denote by $\mathbb{R}^e$,
$\mathbb{R}^p$ or $\mathbb{R}^h$ the corresponding Clifford algebra $\mathbb{R}^{\sigma}$. Therefore, an isomorphic representation of the group
${\rm SL}(2,\mathbb{R})$ with the same product is obtained if we replace the matrix $\displaystyle  \left(\begin{array}{cc}
    a  &  b     \\
    c  &  d   \\
    \end{array}\right)$ by $\displaystyle  \left(\begin{array}{cc}
    a  &  be_0     \\
    -ce_0  &  d   \\
    \end{array}\right)$, which has the advantage of defining the M\"{o}bius transformation
    $\mathbb{R}^{\sigma} \to \mathbb{R}^{\sigma}$, given by (see \cite{Kisil} for more details)
\[\left(\begin{array}{cc}
    a  &  be_0     \\
    -ce_0  &  d   \\
    \end{array}\right): \, ue_0 + ve_1 \to \frac{a(ue_0 + ve_1)+be_0}{-ce_0(ue_0 + ve_1)+d}.  \]

In this representation, by the Iwasawa decomposition Theorem (see \cite{Husemuller},
\cite{Iwa}, \cite{Kisil}), we can factorize ${\rm SL}(2,\mathbb{R})=ANK$ 
so that each matrix factorizes as
\begin{equation}
\left(\begin{array}{cc}
    a  &  be_0     \\
    -ce_0  &  d   \\
    \end{array}\right)= \left(\begin{array}{cc}
    -\alpha  &  0     \\
    0  &  \alpha  \\
    \end{array}\right)\left(\begin{array}{cc}
    1  &  \nu e_0     \\
    0  &  1   \\
    \end{array}\right)\left(\begin{array}{cc}
    \cos \phi  &  e_0 \sin \phi    \\
    e_0 \sin \phi  &  \cos \phi  \\
    \end{array}\right),
\end{equation}
with
\[ \alpha^{-1}= \sqrt{c^2+d^2}, \quad \nu= ac+bd, \quad \text{and} \quad \phi= - \arctan \left(\frac{c}{d} \right). \]
The one dimensional Abelian subgroup $A$ is the normalizer of the nilpotent one dimensional subgroup $N$, and $K$ is a maximal compact subgroup of ${\rm SL}(2,\mathbb{R})$.

\smallskip

In the Lie algebra $sl(2,\mathbb{R})$ of ${\rm SL}(2,\mathbb{R})$ we consider the following suitable set of Killing vector fields,
\[ \left\{
X_1 = \frac{1}{2} \left( \begin{array}{ccc}
    1 & 0 \\
     0 & -1  \\
    \end{array}\right),  \quad
X_2 = \left(\begin{array}{cc}
     0 & 1 \\
    0 & 0  \\
    \end{array}\right), \quad
X_3 = \left(\begin{array}{cc}
    0 & e_0 \\
    -e_0 & 0  \\
    \end{array}\right)
\right\}. \]

If we consider also the exponential map of matrices,
\[ \exp:  sl(2,\mathbb{R}) \to {\rm SL}(2,\mathbb{R}), \]
applied to  the one-parameter  additive subgroups (straight lines) $\{ t X_1 \}$, $ \{ t X_2 \}$,  and $\{ t X_3 \}$, we obtain the following one-dimensional factor subgroups of the Lie group ${\rm SL}(2,\mathbb{R}) $.
\begin{enumerate}
\item[\bf 1.]  The isometric normal (homothetic) subgroup,
\[A= \left\{ \exp (t X_1)=
\left(\begin{array}{cc}
    e^{t/2} & 0 \\
    0  & e^{-t/2}  \\
    \end{array}\right) \right\}
\]
which defines the one-parameter family of acting M\"obius transformations
\begin{equation}\label{eq:Mobius-Klein-1}
 f_{1} (w,t) = e^{t}  w,
 \end{equation}
 and associated to the differential equation
 \begin{equation}\label{eq:Mobius-Klein-1a}
 \dot{w} = w.
 \end{equation}
 The flow of the corresponding vector field is a set of straight lines emanating from the origin, as shown in Figure \ref{normalizer-vector-field}.
 \begin{figure}[h]
\centering
\includegraphics[width=6cm]{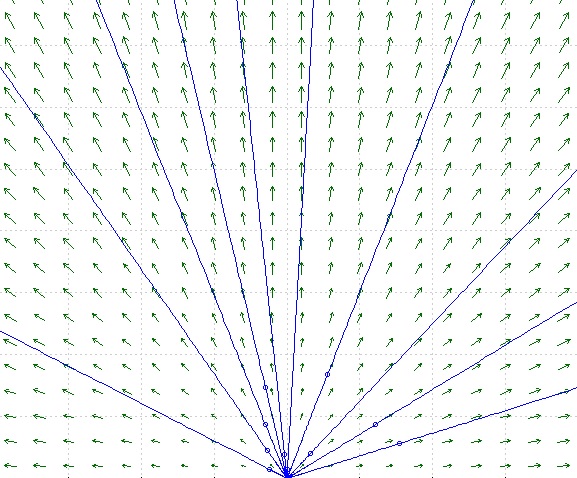}
\caption{The vector field $\dot{w}= w$.}
\label{normalizer-vector-field}
\end{figure}

\item[\bf 2.]  The isometric nilpotent shift subgroup,
\[ N= \left\{\exp (t X_2)=
\left(\begin{array}{cc}
    1 & t \\
    0  & 1 \\
    \end{array}\right) \right\}
\]
which defines the one-parameter family of acting M\"obius transformations
\begin{equation}\label{eq:Mobius-Klein-2}
f_{2} (w,t) =  w + t,
\end{equation}
and associated to the differential equation
 \begin{equation}\label{eq:Mobius-Klein-2a}
 \dot{w} = 1.
 \end{equation}
 The flow of the corresponding vector field is a set of horizontal parallel straight lines, as shown in Figure \ref{nilpotent-vector-field}.
 \begin{figure}[h]
\centering
\includegraphics[width=6cm]{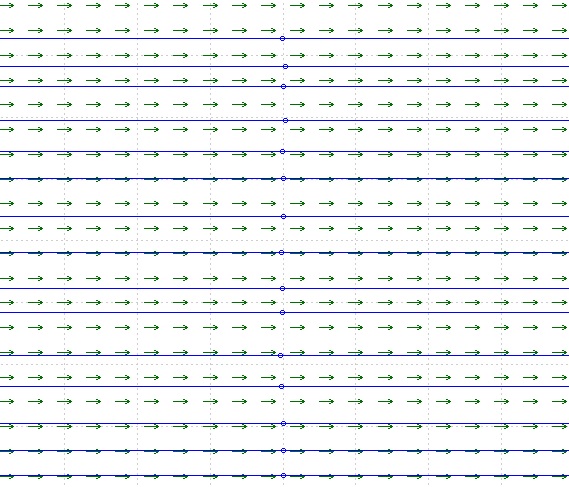}
\caption{The vector field $\dot{w}= 1$.}
\label{nilpotent-vector-field}
\end{figure}

\item[\bf 3.]  The isometric rotation subgroup,
\[ K= \left\{ \exp (t X_3)=
\left(\begin{array}{cc}
    \cos t & e_0 \sin t \\
    e_0 \sin t  & \cos t  \\
    \end{array}\right) \right\}
\]
which defines the one-parameter family of acting M\"obius transformations
\begin{equation}\label{eq:Mobius-Klein-3}
f_{3} (w,t) = \frac{ (\cos t) \, w + e_0 \sin t }{ ( e_0 \sin t) \, w + \cos t},
\end{equation}
and is associated to the differential equation
 \begin{equation}\label{eq:Mobius-Klein-3a}
 \dot{w} = e_0(1-w^2).
 \end{equation}

The three different Killing vector fields associated to $\mathbb{R}^e$, $\mathbb{R}^p$ or $\mathbb{R}^h$ for each corresponding Clifford algebra are as follows.
 \begin{enumerate}
\item[a.]  The flow of the corresponding vector field in $\mathbb{R}^e$ is a set of coaxal circles with focus (in the sense of M\"{o}bius geometry, see \cite{Kisil}) at the point $w=i$, as shown in Figure \ref{elliptic-vector-field}.
 \begin{figure}[h]
\centering
\includegraphics[width=6cm]{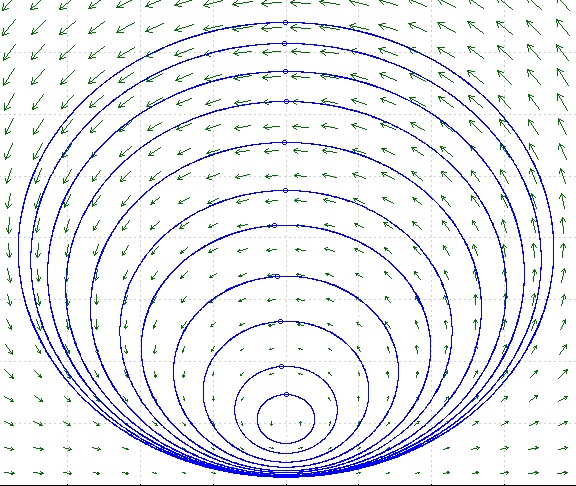}
\caption{The elliptic vector field $\dot{w}= e_0(1-w^2)$ for $e_1^2=-1$.}
\label{elliptic-vector-field}
\end{figure}

\item[b.]  The flow of the corresponding vector field in $\mathbb{R}^p$ is a set of vertical parabolas with
horizontal directrices, as shown in Figure \ref{parabolic-vector-field}.
 \begin{figure}[h]
\centering
\includegraphics[width=6cm]{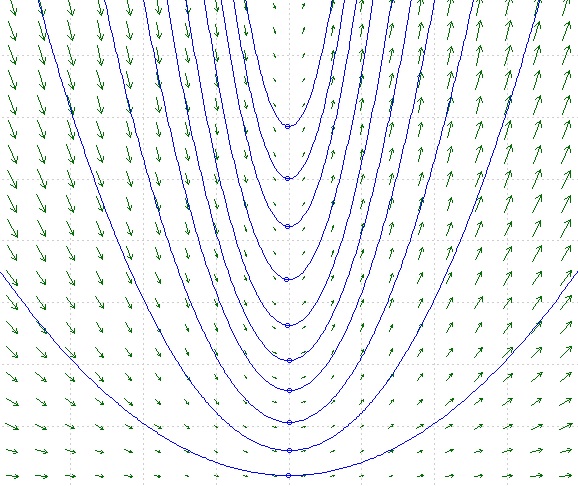}
\caption{The parabolic vector field $\dot{w}= e_0(1-w^2)$ for $e_1^2=0$.}
\label{parabolic-vector-field}
\end{figure}

\item[c.]  The flow of the corresponding vector field in $\mathbb{R}^h$ is a set of vertical hyperbolas with  asymptotes parallel to the diagonal, as shown in Figure \ref{hyperbolic-vector-field}.
 \begin{figure}[h]
\centering
\includegraphics[width=6cm]{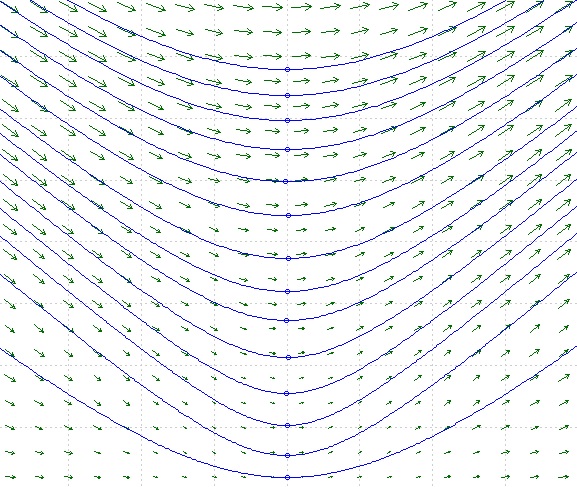}
\caption{The hyperbolic vector field $\dot{w}= e_0(1-w^2)$ for $e_1^2=1$.}
\label{hyperbolic-vector-field}
\end{figure}
\end{enumerate}
\end{enumerate}

Via the exponential map, even though every matrix in the Lie Algebra $sl(2,\mathbb{R})$ defines a one-parametric subgroup of $SL(2,\mathbb{R})$,
there are only three classes of one-parametric subgroups in  $SL(2,\mathbb{R})$ under the conjugation $B \to MBM^{-1}$ (see Kisil \cite{Kisil2}).

\begin{Lemma}\label{prop:principal-subgroups} Any continuous one-parametric subgroup of $SL(2,\mathbb{R})$ is conjugated to $A$,
$N$ or $K$.
\end{Lemma}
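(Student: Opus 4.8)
The plan is to reduce the classification of continuous one-parameter subgroups to the adjoint classification of their infinitesimal generators in the Lie algebra $sl(2,\mathbb{R})$, and then to read off exactly three types from the sign of a single invariant, the determinant of the generator.

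First I would invoke the standard fact of Lie theory that every continuous one-parameter subgroup of $SL(2,\mathbb{R})$ is automatically smooth and therefore of the form $t \mapsto \exp(tX)$ for a unique $X \in sl(2,\mathbb{R})$, using the exponential map already introduced above. It then suffices to understand the trace-free real $2\times 2$ matrices $X$ up to the conjugation $X \mapsto MXM^{-1}$. Since $\operatorname{tr} X = 0$, the characteristic polynomial of $X$ equals $\lambda^2 + \det X$, so its eigenvalues are $\pm\sqrt{-\det X}$ and the entire analysis is governed by the sign of $\Delta := -\det X$.

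Next I would treat the three cases. If $\Delta > 0$, the eigenvalues are real and distinct, $X$ is diagonalizable over $\mathbb{R}$, and a change of basis sends it to $2\sqrt{\Delta}\,X_1$, so the subgroup $\{\exp(tX)\}$ is carried to $A$. If $\Delta = 0$ and $X \neq 0$, then $X^2 = 0$; choosing a vector $v$ with $Xv \neq 0$, the ordered pair $(Xv,v)$ is a basis in which $X$ becomes a nonzero multiple of $X_2$, so the subgroup is carried to $N$ (while $X = 0$ yields the trivial subgroup). If $\Delta < 0$, the eigenvalues are $\pm i\sqrt{-\Delta}$, the real canonical form is a nonzero multiple of the rotation generator $X_3$, and the subgroup is carried to $K$. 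This case analysis is routine linear algebra.

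The delicate step, which I expect to be the only genuine obstacle, is the passage from conjugacy of generators to conjugacy of subgroups \emph{inside} $SL(2,\mathbb{R})$ rather than $GL(2,\mathbb{R})$, since the adjoint orbits of $sl(2,\mathbb{R})$ are in fact finer than the subgroup classification. Two remarks resolve this. First, rescaling the generator, $X \mapsto sX$ with $s \neq 0$, merely reparametrizes the flow, because $\{\exp(tsY)\}$ and $\{\exp(t'Y)\}$ are the same subgroup; hence the positive multiples appearing above are harmless. Second, if the conjugating matrix $M$ has determinant $-1$, I would replace it by $\operatorname{diag}(1,-1)\,M$, which lies in $SL(2,\mathbb{R})$; this only conjugates the target $X_1$, $X_2$, or $X_3$ by $\operatorname{diag}(1,-1)$, changing it by at most a sign, and since $\{\exp(tY)\} = \{\exp(-tY)\}$ the subgroup class is unaffected. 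Consequently the finer adjoint orbit structure collapses onto exactly the three subgroup classes $A$, $N$, and $K$, and every continuous one-parameter subgroup of $SL(2,\mathbb{R})$ is conjugate to one of them, as claimed.
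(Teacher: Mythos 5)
Your proposal is correct, and the first thing to note is that the paper contains no proof of this lemma at all: the statement is given bare, with the classification delegated to Kisil \cite{Kisil2} in the sentence immediately preceding it (``there are only three classes of one-parametric subgroups in $SL(2,\mathbb{R})$ under the conjugation $B \to MBM^{-1}$''). So your argument does not diverge from the paper's proof --- it supplies one where the paper offers only a citation. The route you take is the standard one and, in substance, the one found in Kisil's book: continuity forces smoothness, so the subgroup is $\{\exp(tX)\}$ for a unique $X\in sl(2,\mathbb{R})$; since $\operatorname{tr}X=0$, the conjugation-invariant data reduces to the sign of $-\det X$, yielding the real-diagonalizable, nilpotent and rotation normal forms, which generate $A$, $N$ and $K$ respectively. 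Crucially, you identify and correctly dispose of the one genuine subtlety that a careless write-up would miss: the $SL(2,\mathbb{R})$-adjoint orbits are strictly finer than subgroup conjugacy (for instance $X_3$ and $-X_3$ lie in different orbits, and $\det$ separates the scalings), and you collapse this finer structure by the two observations that rescaling the generator only reparametrizes the subgroup, and that a conjugator of determinant $-1$ may be replaced by $\operatorname{diag}(1,-1)\,M\in SL(2,\mathbb{R})$ at the cost of a sign on $X_1$, $X_2$ or $X_3$, which is harmless because $\{\exp(tY)\}=\{\exp(-tY)\}$ as sets.

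Three small remarks, none a gap. First, the determinant-$(-1)$ fix presupposes $M$ normalized to $|\det M|=1$; say so explicitly (replace $M$ by $M/\sqrt{|\det M|}$, which acts identically by conjugation). Second, the trivial subgroup ($X=0$) must indeed be excluded from the statement, as your parenthesis does; the paper is silent on this. Third, be aware that the paper's displayed $X_3=\left(\begin{smallmatrix} 0 & e_0 \\ -e_0 & 0\end{smallmatrix}\right)$ carries a sign slip: with $e_0^2=-1$ one computes $X_3^2=+I$, so as printed it would generate a hyperbolic one-parameter group rather than the compact $K$ displayed beside it; the generator consistent with $\exp(tX_3)=\left(\begin{smallmatrix} \cos t & e_0\sin t \\ e_0\sin t & \cos t\end{smallmatrix}\right)$ is $\left(\begin{smallmatrix} 0 & e_0 \\ e_0 & 0\end{smallmatrix}\right)$, i.e.\ the classical rotation generator $\left(\begin{smallmatrix} 0 & 1 \\ -1 & 0\end{smallmatrix}\right)$ in the standard representation --- which is exactly the matrix your $\Delta<0$ case targets, so your proof matches the intended $K$.
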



\section{M\"{o}bius solutions of the $n$-body problem in $\mathbb{H}^2_R$}
\label{sec:Klein_model}

In this section we obtain the whole set of the so called {\it M\"{o}bius solutions} or {\it relative equilibria solutions} of the mechanical system (\ref{eq:motionkleingeo}). For this, we will use the method of matching the gravitational hyperbolic cotangent  field with each one of the Killing vector fields associated to the above one-dimensional subgroups as  in \cite{Diacu8} and \cite{Perez-Reyes}.

\begin{Definition}\label{def:relative-equilibria}
A  {\it M\"{o}bius solution} or {\it relative equilibrium}  for the $n$--body problem in $\mathbb{H}^2_R$, is a solution $w(t)=(w_1(t), w_2(t), \cdots, w_n(t))$ of the equations of motion (\ref{eq:motionkleingeo}) which is invariant under some one-dimensional subgroup $B(t)$ of the group $SL(2,\mathbb{R})$. In other words, the function obtained by the action denoted by $z(t)= B(t) w(t)$ is also a solution of (\ref{eq:motionkleingeo}).
\end{Definition}

Since, from Lemma \ref{prop:principal-subgroups}, the basic one-dimensional parametric subgroups (\ref{eq:Mobius-Klein-1a}), (\ref{eq:Mobius-Klein-2a}) and (\ref{eq:Mobius-Klein-3a}) define, up to conjugation, the whole set of one-dimensional parametric subgroups in $SL(2,\mathbb{R})$,
we will show in the following subsections the corresponding M\"{o}bius motions associated to such subgroups. From now on, and for short,
we will understand that the whole set of solutions under study are of type M\"{o}bius.

 \subsection{Hyperbolic normal M\"{o}bius solutions}
\label{subsec:hyperbolic-relative eq}

In this subsection we state conditions for the existence of normal hyperbolic solutions
(also called homothetic relative equilibria) for the problem in $\mathbb{H}^2_{R}$.
These conditions were obtained in Diacu {\it et al.} \cite{Diacu8}.

\begin{Definition}
A  {\it normal hyperbolic solution} for the $n$--body problem in $\mathbb{H}^2_R$ is a solution $w(t)=(w_1(t), w_2(t), \cdots, w_n(t))$ of the equations of motion (\ref{eq:motionkleingeo}) which is invariant under the subgroup $A$ associated to the vector field given by (\ref{eq:Mobius-Klein-1a}).
\end{Definition}

A straightforward substitution of equations (\ref{eq:Mobius-Klein-1a}) in equations (\ref{eq:motionkleingeo}) shows that a necessary and sufficient  condition for the function $w=(w_1, \dots, w_n)$ to be a solution of system \eqref{eq:motionkleingeo} and, at the same time, a normal hyperbolic solution is that, for every $k=1,\dots,n,$ the coordinates satisfy the algebraic functional system
\begin{equation} \label{eq:condrationalsystemklein1a}
\frac{R (w_k+\bar{w}_k) \, w_k}{8(w_k- \bar{w}_k)^4} =  \sum_{\substack{j=1\\ j\ne k}}^n
\frac{m_j(w_j-\bar{w}_j)^2(w_k-w_j)(\bar{w}_j-w_k)}{[\Theta_{(k,j)}(w,
\bar{w})]^{3/2}}.
\end{equation}

Therefore, we can also call these solutions {\it hyperbolic normal solutions}.

In \cite{Diacu8} it is proved the existence of such motions  for 2 and 3 interacting particles in $\mathbb{H}^2_{R}$.

\subsection{Parabolic nilpotent M\"{o}bius solutions}
\label{subsec:parabolic-relative eq}

In this subsection we show the  M\"{o}bius solutions associated to the subgroup
generated by the Killing vector field $X_2$ which defines the one-parametric family of acting M\"obius transformations (\ref{eq:Mobius-Klein-2a}) in the upper half plane $\mathbb{H}^2_R$. 

\begin{Definition}
A  {\it Parabolic nilpotent M\"{o}bius solution} for the $n$--body problem in $\mathbb{H}^2_R$ is a solution $w(t)=(w_1(t), w_2(t), \cdots, w_n(t))$ of the equations of motion (\ref{eq:motionkleingeo}) which is invariant under the subgroup $N$ associated to the vector field of equation (\ref{eq:Mobius-Klein-2a}).
\end{Definition}

Again as before, a simple substitution of equations (\ref{eq:Mobius-Klein-2a}) in the system (\ref{eq:motionkleingeo}) shows that a  necessary and sufficient  condition for  the function $w=(w_1,\dots, w_n)$ solution of system
\eqref{eq:motionkleingeo} to be a  Parabolic nilpotent solution, is that the coordinate functions satisfy the algebraic functional 
system of equations 
\begin{equation} \label{eq:condrationalsystemklein2b}
-\frac{R}{4(w_k- \bar{w}_k)^4} = \sum_{\substack{j=1\\ j\ne k}}^n
\frac{m_j
(\bar{w}_j-w_j)^2(w_k-w_j)(\bar{w}_j-w_k)}{[\tilde{\Theta}_{(k,j)}(w,
\bar{w})]^{3/2}}. 
\end{equation}

For this reason, we also call these solutions {\it parabolic nilpotent solutions}.

In \cite{Diacu8} it is proved that in this $n$-body problem there are no parabolic nilpotent solutions.

\subsection{Elliptic cyclic M\"{o}bius solutions}
\label{subsec:elliptic-relative eq}


We now discuss the M\"{o}bius solutions of \eqref{eq:motionkleingeo}  corresponding to the action of the Killing vector field
$ X_3$  associated to the differential equation (\ref{eq:Mobius-Klein-3a}) for the case when $e_1^2=-1$.

It is not hard to see (Kisil \cite{Kisil}) that in this case such equation becomes
\begin{equation}\label{eq:eliptic-killing-vector-field}
 \dot{w}= 1+w^2.
\end{equation}

\begin{Definition}
An  {\it elliptic cyclic solution} for the $n$--body problem in $\mathbb{H}^2_R$ is a solution $w(t)=(w_1(t), w_2(t), \cdots, w_n(t))$ of the equations of motion (\ref{eq:motionkleingeo}) which is invariant under the subgroup $K$ associated to the vector field given by (\ref{eq:eliptic-killing-vector-field}).
\end{Definition}

It is easy to see that a necessary and sufficient condition for a solution of system \eqref{eq:motionkleingeo} $w=(w_1,\dots, w_n)$ to be an elliptic cyclic solution is that for all $k=1,\dots, n,$ the following system of algebraic equations be satisfied
\begin{equation} \label{eq:rationalsystem-1}
\frac{R(1+w_k^2)(1+|w_k|^2)}{ (w_k-\bar{w}_k)^4 }
= \sum_{\substack{j=1\\ j\ne k}}^n \frac{m_j   (\bar{w}_j-w_j)^2(w_k-w_j)(\bar{w}_j-w_k)}{[\Theta_{(k,j)}(w, \bar{w})]^{3/2}}.
\end{equation}

We also call these solutions {\it elliptic cyclic solutions}.
 
 In  \cite{Diacu8} it is proved the existence of this type of solutions for the $n$-body problem in the Poincar\'e disk $\mathbb D_R^2 $ of radius $R$ 
in the complex plane, endowed with the conformal metric
\[ ds^2= \frac{4R^4}{(R^2-|z|^2)^2}dz \, d\bar{z}, \]
for 2 and 3 bodies, when the isometric acting group is $SU(1,1)$. Such  results can be carried to our space via the M\"{o}bius linear fractional transformation $ z : \mathbb H_R^2 \to \mathbb D_R^2 $ 
\begin{equation}\label{eq:Moebius-disk-plane}
z= z(w)= \frac{-Rw+iR^2}{w+iR}.
\end{equation}

For example, if we know that the circle of the elliptic foliation passing through  the points  $\displaystyle  i \, \alpha $ and
$\displaystyle  \frac{i}{\alpha}$ (with $1<\alpha$) is centred at the point $\displaystyle  \frac{i}{2} \left(  \alpha +  \frac{1}{\alpha} \right)$ 
and has radius $\displaystyle  r =\frac{\alpha^2 +1}{2 \alpha}$, which has equation (see \cite{Kisil})
\begin{equation}\label{eq:geometric-coaxal-circle}
2 |w|^2 + i(w - \bar{w}) \, \left(\alpha + \frac{1}{\alpha} \right) + 2 = 0,
\end{equation}
then, by applying the transformation (\ref{eq:Moebius-disk-plane}), we can re obtain the following result for two bodies.

\begin{Corollary} ({\bf Two bodies}) \label{the:elliptic-relat-equi-two-body}
Consider 2 point particles of masses $m_1, m_2>0$ moving with positions $w_1=w_1(t)$ and $w_2=w_2(t)$ respectively in  $\mathbb{H}_R^2$, and along circles (\ref{eq:geometric-coaxal-circle}) whose centres are  on the imaginary axis. Then the function $w=(w_1,w_2)$ is an elliptic cyclic  solution of system \eqref{eq:motionkleingeo} with $n=2$, if and only if, for the circle centred at $\displaystyle  \frac{i}{2} \left(  \alpha +  \frac{1}{\alpha} \right)$ of radius $\displaystyle   =\frac{\alpha^2 +1}{2 \alpha}$
 along which $m_1$ moves, there is a unique circle centred at $\displaystyle \frac{i}{2} \left(\beta+ \frac{1}{\beta} \right)$ of radius $\displaystyle \frac{\beta^2 +1}{2 \beta}$,  along which $m_2$ moves, such that, at every time instant, $m_1$ and $m_2$ are sited on the same geodesic arc of $\mathbb{H}_R^2$ but in opposite sides  of the aforementioned isometric circles. Moreover,
\begin{enumerate}
\item if  $m_2>m_1>0$ and $\alpha$ are given, then $ \beta < \alpha$;
\item if $m_1 =m_2>0$ and $\alpha$ are given, then $\beta = \alpha$;
\item if $m_1>m_2>0$ and $\alpha$ are given, then $\beta >\alpha$.
\end{enumerate}
\end{Corollary}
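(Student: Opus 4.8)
The plan is to exploit the fact, already recorded in this subsection, that \eqref{eq:rationalsystem-1} is a necessary and sufficient \emph{instantaneous} condition for $w=(w_1,w_2)$ to be an elliptic cyclic solution. Because such a solution is by definition invariant under the compact subgroup $K$, whose action is by isometries preserving \eqref{eq:motionkleingeo}, it suffices to verify \eqref{eq:rationalsystem-1} at a single convenient instant of the orbit; I would pick the instant at which both bodies lie on the imaginary axis. To read off the geometry I first transport the configuration to the Poincar\'e disk through \eqref{eq:Moebius-disk-plane}: the coaxal circles \eqref{eq:geometric-coaxal-circle} of the elliptic foliation become circles centred at the origin, the circle labelled by $\alpha$ going to the circle of Euclidean radius $(\alpha-1)/(\alpha+1)$, and the focus $w=i$ going to the centre. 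The requirement that $m_1$ and $m_2$ sit on a common geodesic and on opposite sides of these circles then says precisely that in the disk they are diametrically opposite on a diameter, which pulls back to the purely imaginary positions $w_1=i/\alpha$ and $w_2=i\beta$.

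Next I substitute $w_1=ia$, $w_2=ib$ (with $a=1/\alpha<1<\beta=b$) into \eqref{eq:rationalsystem-1} for $n=2$. The imaginary positions make $\bar{w}_k+w_k=0$, so $\Theta_{(1,2)}$ collapses to $4(a^2-b^2)^2$ and every factor becomes real; after simplification the two equations of \eqref{eq:rationalsystem-1} read
\begin{equation}
\frac{R(1-a^4)}{a^4}=\frac{8m_2 b^2}{(b^2-a^2)^2},\qquad
\frac{R(b^4-1)}{b^4}=\frac{8m_1 a^2}{(b^2-a^2)^2}.
\end{equation}
Dividing these and substituting $a=1/\alpha$, $b=\beta$ eliminates $R$ and the common denominator, yielding the single relation
\begin{equation}
\frac{m_2}{m_1}=\frac{g(\alpha)}{g(\beta)},\qquad g(x):=\frac{x^4-1}{x^2}=x^2-x^{-2}.
\end{equation}
Since $g$ is a strictly increasing bijection of $(1,\infty)$ onto $(0,\infty)$, for a prescribed $\alpha>1$ and prescribed masses the equation $g(\beta)=g(\alpha)\,m_1/m_2$ determines a unique $\beta>1$, and the remaining equation of the pair then fixes the rotational scale; hence the matching circle for $m_2$ is unique. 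The three cases follow at once from the monotonicity of $g$: $g(\beta)\lessgtr g(\alpha)$ according as $m_1/m_2\lessgtr1$, that is $\beta<\alpha$ when $m_2>m_1$, $\beta=\alpha$ when $m_1=m_2$, and $\beta>\alpha$ when $m_1>m_2$.

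The step I expect to be most delicate is the sign and branch bookkeeping in the complex evaluation, together with showing that the opposite-sides configuration is forced rather than merely assumed. I would settle this by also testing the same-side configuration $w_2=i/\beta$: there the two sides of the $k=2$ equation of \eqref{eq:rationalsystem-1} carry opposite signs, so no solution can exist, which rules it out and confirms that a genuine two-body elliptic cyclic solution must place the bodies on opposite sides of the common focus. A secondary point deserving care is the precise meaning of \emph{uniqueness}: with the normalisation $\dot{w}=1+w^2$ the displayed pair over-determines $(\alpha,\beta)$ for a fixed $R$, so the uniqueness of $\beta$ given $\alpha$ is to be understood with the rotational speed (equivalently the scale of $R$) treated as the free parameter that the second equation fixes.
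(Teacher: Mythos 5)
Your proposal is correct, and it proves the statement by a genuinely different route than the paper. The paper offers no computation here at all: the result is a true corollary in its economy of means, obtained by citing the two-body elliptic relative-equilibria theorem proved in \cite{Diacu8} for the Poincar\'e disk $\mathbb{D}_R^2$ under the action of $SU(1,1)$, and transporting that theorem to $\mathbb{H}_R^2$ through the M\"obius map \eqref{eq:Moebius-disk-plane} together with the coaxal-circle description \eqref{eq:geometric-coaxal-circle}. You instead verify everything intrinsically in the half-plane: reduce by the $K$-equivariance of the matching condition to the instant when the bodies cross the imaginary axis, substitute $w_1=ia$, $w_2=ib$ with $a=1/\alpha$, $b=\beta$ into \eqref{eq:rationalsystem-1}, and collapse the system. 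I checked your algebra and it is right: $\Theta_{(1,2)}$ does reduce to $4(b^2-a^2)^2$, your two real equations follow with the stated signs, and dividing them yields the $R$-free relation $m_2/m_1=g(\alpha)/g(\beta)$ with $g(x)=x^2-x^{-2}$, which is a strictly increasing bijection of $(1,\infty)$ onto $(0,\infty)$; the trichotomy in the statement is then immediate. What your route buys is a self-contained and \emph{quantitative} proof --- the explicit mass-ratio relation is sharper than the qualitative conclusion, and incidentally confirms from \eqref{eq:geometric-coaxal-circle} that the circle through $i\alpha$ and $i/\alpha$ has Euclidean radius $(\alpha^2-1)/(2\alpha)$, so the $(\alpha^2+1)/(2\alpha)$ in the statement is a typo. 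What the paper's route buys is brevity and consistency with its overall strategy of carrying disk results over to $\mathbb{H}_R^2$. Your closing remark on overdetermination is also well taken and matches the only honest reading of the statement: with $\dot w=\lambda(1+w^2)$, the factor $\lambda^2$ multiplies the left side of \eqref{eq:rationalsystem-1} and cancels in the ratio, so ``given $\alpha$, a unique $\beta$'' holds with the angular speed (equivalently the scale of $R$) as the free parameter fixed by the remaining equation.

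One small step remains to make your forward implication airtight: you rule out the aligned same-side configuration by a sign comparison, but you should also exclude a phase-misaligned pair, i.e.\ $m_2$ off the imaginary axis at the instant $m_1$ crosses it. This is one line inside your own computation: with $w_1=ia$ and $w_2=u+iv$, the left side of the $k=1$ equation of \eqref{eq:rationalsystem-1} is real, while on the right side the factors $(\bar w_2-w_2)^2=-4v^2$ and $\Theta_{(1,2)}$ are real and
\begin{equation}
(w_1-w_2)(\bar w_2-w_1)=a^2-u^2-v^2+2iua,
\end{equation}
so the imaginary part of the equation forces $u=0$, i.e.\ simultaneous alignment on the axis; your sign test then forces opposite sides, and equivariance under the elliptic flow propagates the common-geodesic, opposite-sides configuration to all times.
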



\subsection{Parabolic cyclic M\"{o}bius solutions}
\label{subsec:parabolic-cyclic-ic-relative eq}

In this subsection we study the M\"{o}bius solutions of \eqref{eq:motionkleingeo}
 corresponding to the action of the Killing vector field
$ X_3$  associated to the differential equation (\ref{eq:Mobius-Klein-3a}) for the case when $e_1^2=0$.                                                                            It is not hard to see that in this case such that equation becomes into the one (see \cite{Kisil}),
\begin{equation}\label{eq:eliptic-killing-vector-field-2}
 \dot{w}= 1+w^2-\frac{(w-\bar{w})^2}{4}.
\end{equation}

\begin{Definition}
A {\it parabolic cyclic solution} for the $n$--body problem in $\mathbb{H}^2_R$, is a solution $w(t)=(w_1(t), w_2(t), \cdots, w_n(t))$ of the equations of motion (\ref{eq:motionkleingeo}) which is invariant under the subgroup $K$ associated to the vector field of equation (\ref{eq:eliptic-killing-vector-field-2}).
\end{Definition}

We obtain the condition for a solution of equation (\ref{eq:motionkleingeo}) to be invariant under
the Killing vector field (\ref{eq:eliptic-killing-vector-field-2}).

\begin{Lemma}\label{thm:existence-2} Consider $n$ point particles
with masses $m_1,\dots, m_n>0$, $n\ge 2$, moving in $\mathbb{H}^2_{R}$ with positions $w_k =w_k(t)$ for $k=1,\dots, n$ respectively. 
A necessary and sufficient condition for  the function $w=(w_1,\dots, w_n)$ to be a parabolic cyclic solution of system \eqref{eq:motionkleingeo}, 
is that for all $k=1,\dots, n,$ the following algebraic functional equations 
\begin{eqnarray} \label{eq:rationalsystem-2}
 &-& \frac{R[(w_k-\bar{w}_k)^2(8-w^2_k+6|w_k|^2+3\bar{w}^2_k)-16(1+w^2_k)(1+|w_k|^2)]}{16 (w_k-\bar{w}_k)^4}
 \nonumber \\
 &=& \sum_{\substack{j=1\\ j\ne k}}^n \frac{m_j
   (\bar{w}_j-w_j)^2(w_k-w_j)(\bar{w}_j-w_k)}{[\Theta_{(k,j)}(w, \bar{w})]^{3/2}} \nonumber \\
\end{eqnarray}
are satisfied.
\end{Lemma}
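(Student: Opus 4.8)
The plan is to follow verbatim the template already used for the hyperbolic normal, parabolic nilpotent and elliptic cyclic subsections: invariance under the one–parameter subgroup $K$ means that each body is carried along the flow of the governing Killing vector field, so that for every $k$ the position must satisfy the first–order equation
\begin{equation*}
\dot{w}_k = P_k := 1 + w_k^2 - \frac{(w_k-\bar{w}_k)^2}{4},
\end{equation*}
which is just (\ref{eq:eliptic-killing-vector-field-2}) evaluated at $w_k$. Feeding this relation, together with the $\ddot{w}_k$ it induces, into the left–hand side of the equations of motion (\ref{eq:motionkleingeo}) turns that side into a purely algebraic function of $w_k$ and $\bar{w}_k$; equating it with the right–hand side and clearing the common factor then yields the claimed system (\ref{eq:rationalsystem-2}). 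Because every manipulation is an equivalence, the condition comes out necessary and sufficient at once.

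First I would compute $\ddot{w}_k$. In contrast with the elliptic case, here $P_k$ depends on both $w_k$ and $\bar{w}_k$, so differentiating in time and using $\dot{\bar{w}}_k=\overline{\dot{w}_k}=\bar{P}_k$ gives
\begin{equation*}
\ddot{w}_k = \Big(2w_k - \tfrac{w_k-\bar{w}_k}{2}\Big)P_k + \frac{w_k-\bar{w}_k}{2}\,\bar{P}_k.
\end{equation*}
The decisive simplification, which keeps the bookkeeping finite, is the identity $\bar{P}_k - P_k = \bar{w}_k^2 - w_k^2 = -(w_k-\bar{w}_k)(w_k+\bar{w}_k)$. With it the cross term collapses and
\begin{equation*}
\ddot{w}_k = 2w_k P_k - \frac{(w_k-\bar{w}_k)^2(w_k+\bar{w}_k)}{2}.
\end{equation*}

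Next I would assemble the left–hand side $\ddot{w}_k - 2\dot{w}_k^2/(w_k-\bar{w}_k)$ of (\ref{eq:motionkleingeo}), writing it as
\begin{equation*}
\frac{2P_k\big(w_k(w_k-\bar{w}_k)-P_k\big)}{w_k-\bar{w}_k} - \frac{(w_k-\bar{w}_k)^2(w_k+\bar{w}_k)}{2},
\end{equation*}
and then use $w_k(w_k-\bar{w}_k)-P_k = -(1+|w_k|^2) + \tfrac14(w_k-\bar{w}_k)^2$ to expand everything in the three building blocks $1+w_k^2$, $1+|w_k|^2$ and $(w_k-\bar{w}_k)^2$. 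After multiplying through by $8(w_k-\bar{w}_k)$ and collecting, the quadratic and quartic contributions in $(w_k-\bar{w}_k)$ reorganize — via $(w_k-\bar{w}_k)^2 = w_k^2 - 2|w_k|^2 + \bar{w}_k^2$ and $(w_k-\bar{w}_k)(w_k+\bar{w}_k) = w_k^2-\bar{w}_k^2$ — precisely into $(w_k-\bar{w}_k)^2(8 - w_k^2 + 6|w_k|^2 + 3\bar{w}_k^2)$, leaving the block $-16(1+w_k^2)(1+|w_k|^2)$ intact. Dividing the equality (\ref{eq:motionkleingeo}) through by the prefactor $-2(w_k-\bar{w}_k)^3/R$ of its right–hand side then carries this algebraic left–hand side into $-R[\dots]/(16(w_k-\bar{w}_k)^4)$, which is exactly the left member of (\ref{eq:rationalsystem-2}).

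The main obstacle is not conceptual but the algebraic reorganization of this last step: since the parabolic vector field (\ref{eq:eliptic-killing-vector-field-2}) is \emph{not} holomorphic, the left–hand side mixes powers of $w_k$ and $\bar{w}_k$ asymmetrically, and one must resist expanding $(w_k-\bar{w}_k)$ prematurely. Organizing the computation around the identity $\bar{P}_k - P_k = -(w_k-\bar{w}_k)(w_k+\bar{w}_k)$ and treating $1+w_k^2$, $1+|w_k|^2$ and $(w_k-\bar{w}_k)^2$ as atomic blocks until the very end is what makes the asymmetric coefficients $-w_k^2 + 6|w_k|^2 + 3\bar{w}_k^2$ emerge cleanly, rather than by brute force; this is also the feature that distinguishes the parabolic case from the holomorphic elliptic condition (\ref{eq:rationalsystem-1}).
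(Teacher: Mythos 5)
Your proposal is correct and follows exactly the paper's own (one-line) argument: differentiate the invariance ansatz $\dot{w}_k = 1+w_k^2-\tfrac{(w_k-\bar{w}_k)^2}{4}$, substitute into the equations of motion \eqref{eq:motionkleingeo}, and simplify --- and your algebra checks out, since with $q=w_k-\bar{w}_k$ one indeed gets $-16P_k(1+|w_k|^2)+4P_kq^2-4q^3(w_k+\bar{w}_k)=q^2(8-w_k^2+6|w_k|^2+3\bar{w}_k^2)-16(1+w_k^2)(1+|w_k|^2)$, which is precisely the numerator in \eqref{eq:rationalsystem-2}. The only difference is that you supply the intermediate steps (the identity $\bar{P}_k-P_k=-(w_k-\bar{w}_k)(w_k+\bar{w}_k)$ and the block-wise reorganization) that the paper omits.
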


\begin{proof}
Result follows when we differentiate the equations $\displaystyle \dot{w}_k =  1+w^2_k-\frac{(w_k-\bar{w}_k)^2}{4},$
 for $ k=1,\dots,n$, and
substitute in  the equations of motion \eqref{eq:motionkleingeo}. This  completes the proof.
\end{proof}

 We call also {\it parabolic cyclic solutions} to these solutions of system \eqref{eq:motionkleingeo} satisfying the conditions \eqref{eq:rationalsystem-2}.

We observe that equation (\ref{eq:rationalsystem-2}) is so hard of solve, and instead of trying this, we
 find explicitly the parabolic cyclic flow of (\ref{eq:eliptic-killing-vector-field-2}) and we propose these type of solutions as in \cite{DiacuPerez}.

If we use real coordinates $(u,v)$ for $w= u + i \, v$ , the complex differential equation (\ref{eq:eliptic-killing-vector-field-2}) can be written (see \cite{Kisil}) as the system of real differential equations
\begin{eqnarray}\label{eq:eliptic-killing-vector-field-2-real}
\dot{u} &=& 1+ u^2, \nonumber \\
\dot{v} &=& 2 u v, \nonumber \\
\end{eqnarray}
which when is integrated for the initial conditions $(u(0)= \alpha, v(0)= \beta)$ has the 
isometric parametrization
\begin{eqnarray}\label{eq:sol-eliptic-killing-vector-field-2-real}
u(t) &=& \frac{\alpha +  \tan t}{1- \alpha   \tan t}, \nonumber \\
v(t) &=& \frac{\beta \,  \sec^2 t}{(1- \alpha   \tan t)^2}. \nonumber \\
\end{eqnarray}

In this way, the isometric parametrization of the solution of equation (\ref{eq:eliptic-killing-vector-field-2})
with initial condition $w(0)= \alpha +i \beta$ is 
\begin{equation}\label{eq:complex-sol-eliptic-killing-vector-field-2-real}
w(t)= \frac{\alpha +  \tan t}{1- \alpha   \tan t} + i\, \frac{\beta \,  \sec^2 t}{(1- \alpha   \tan t)^2}.
\end{equation}

Let $s=\tan t$ be an isometric new variable. Then $\sec^2 t =1+s^2$, and
the parametrization (\ref{eq:complex-sol-eliptic-killing-vector-field-2-real}) becomes
\begin{equation}\label{eq:one-complex-sol-eliptic-killing-vector-field-2-real-a}
w(s)= \frac{\alpha +  s}{1- \alpha  s} + i\, \frac{\beta \, (1+s^2) }{(1- \alpha   s)^2}.
\end{equation}
Since
\begin{eqnarray}\label{eq:derivatives-variable-isometric}
\frac{ d s}{d t} &=& 1+s^2, \nonumber \\
\frac{ d^2 s}{d t^2} &=& 2(1+s^2) s, \nonumber \\
\end{eqnarray}
and $\displaystyle w'= \frac{d w}{ d s}$, then
\begin{eqnarray}\label{eq:rule-chain-variable-isometric}
\dot{w} &=& \frac{d w}{d s}\frac{ d s}{d t} = (1+s^2) w', \nonumber \\
\ddot{w} &=& w'' \, \left(\frac{d w}{d s}\right)^2 + w' \, \frac{ d^2 s}{d t^2} = 
w'' (1+s^2)^2 +2s(1+s^2) w'. \nonumber \\
\end{eqnarray} 

If we substitute equations (\ref{eq:one-complex-sol-eliptic-killing-vector-field-2-real-a}), (\ref{eq:derivatives-variable-isometric}) and
(\ref{eq:rule-chain-variable-isometric}) in equations (\ref{eq:rationalsystem-2}) for all $k, j=1,\dots, n,$ the respective real parts become
\begin{eqnarray}\label{eq:real-part}
&&\frac{R (\alpha_k +s)(1+\alpha^2_k)(1-\alpha_k \, s)^2}{64 \beta^2_k (1+s^2)^5} \nonumber \\
&=& \sum_{j \neq k} \frac{m_j \beta_j^2}{[\Theta_{(k,j)}(\alpha, \beta)]^{3/2}(1-\alpha_j \, s)^4} \left(\frac{\alpha_j +s}{1-\alpha_j \, s} - \frac{\alpha_k +s}{1-\alpha_k \, s} \right), \nonumber \\
\end{eqnarray}
whereas the imaginary parts are
\begin{eqnarray}\label{eq:imaginary-part}
&-& \frac{R (1+ \alpha_k^2)[(1+\alpha^2_k)(1-\alpha_k \, s)^2+2](1-\alpha_k \, s)^2}{64 \beta^2_k (1+s^2)^4} \nonumber \\
&=&  \sum_{j \neq k} \frac{m_j \beta_j^2}{[\Theta_{(k,j)}(\alpha, \beta)]^{3/2}(1-\alpha_j \, s)^4} \times \nonumber \\ 
& \times & \left[\left(\frac{\alpha_j +s}{1-\alpha_j \, s} - \frac{\alpha_k +s}{1-\alpha_k \, s} \right)^2 - (1+s^2)
\left(\frac{\beta_j^2}{(1-\alpha_j s)^4}- \frac{\beta_k^2}{(1-\alpha_k s)^4} \right) \right] \nonumber \\
\end{eqnarray}
where
\begin{eqnarray}
\Theta_{(k,j)}(\alpha, \beta) 
&=&  \left[ 4\left(\frac{\alpha_j +s}{1-\alpha_j \, s}\right) \left(\frac{\alpha_k +s}{1-\alpha_k \, s} \right) - 
 2 \,  \Xi_{(k,j)} (\alpha, \beta, s)  \right]^2 \nonumber \\
&-& \frac{16 \beta_k^2 \beta_j^2(1+s^2)^4}{(1-\alpha_k s)^4(1-\alpha_j s)^4} \nonumber \\
\end{eqnarray}
is the corresponding evaluation of the singular part (\ref{eq:singularklein}) in the $2n$-dimensional parametric vector $(\alpha, \beta)$
and
\[ \Xi_{(k,j)} (\alpha, \beta, s)=\left(\frac{\alpha_j +s}{1-\alpha_j \, s}\right)^2+ \left(\frac{\alpha_k +s}{1-\alpha_k \, s} \right)^2
+ \frac{\beta_j^2(1+s^2)^2}{(1-\alpha_j s)^4}+ \frac{\beta_k^2(1+s^2)^2}{(1-\alpha_k s)^4}.  \]

We obtain the following result.

\begin{Theorem}\label{thm:no-existence-2} There are not parabolic cyclic solutions for the 
$n-$body problem in $\mathbb{H}_R^2$.
\end{Theorem}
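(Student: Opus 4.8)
The plan is to treat the relative-equilibrium condition as a rational identity in the time parameter and read off a sign contradiction from its leading asymptotics. By Lemma \ref{thm:existence-2} a parabolic cyclic solution is precisely a configuration whose coordinates satisfy \eqref{eq:rationalsystem-2} at every instant along the parabolic orbit. Substituting the explicit flow \eqref{eq:one-complex-sol-eliptic-killing-vector-field-2-real-a} and separating real and imaginary parts turns this requirement into the two identities \eqref{eq:real-part} and \eqref{eq:imaginary-part}, expressed through the base-time data $(\alpha_k,\beta_k)=(\operatorname{Re}w_k,\operatorname{Im}w_k)$ and the variable $s=\tan t$. Since $t$ ranges over an open interval, $s$ ranges over an open interval of $\mathbb{R}$, and two rational functions of $s$ that agree on an interval agree identically; thus \eqref{eq:imaginary-part} holds for all $s\in\mathbb{R}$ and in particular its two sides share the same Laurent behaviour as $s\to\infty$. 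I expect the very first (leading) coefficient of the imaginary identity to be already incompatible, so that the real-part identity \eqref{eq:real-part} is not even needed.

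First I would normalize the base time. Each coordinate satisfies $u_k(t)=\tan(t+c_k)$, which vanishes only on a discrete set, so after replacing $t$ by $t+t_0$ for a generic $t_0$ I may assume $\alpha_k\neq 0$ for every $k=1,\dots,n$; equivalently, no particle lies on the imaginary axis at the base time. Next I would record the sign of the left-hand side of \eqref{eq:imaginary-part}: as $R>0$, $1+\alpha_k^2>0$, $(1-\alpha_k s)^2\ge 0$ and the inner bracket satisfies $(1+\alpha_k^2)(1-\alpha_k s)^2+2\ge 2>0$, that side is $\le 0$ for every $s$, and multiplying by $s^4$ and letting $s\to\infty$ yields the strictly negative limit $-\tfrac{R(1+\alpha_k^2)^2\alpha_k^4}{64\beta_k^2}$, where strictness uses $\alpha_k\neq 0$.

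The decisive step is the matching limit of the right-hand side. A direct computation from \eqref{eq:singularklein} gives $\Theta_{(k,j)}=4[(u_k-u_j)^2+(v_k-v_j)^2][(u_k-u_j)^2+(v_k+v_j)^2]>0$ away from collisions, and along the orbit $w_k(s)\to -\alpha_k^{-1}+i\,\beta_k\alpha_k^{-2}$, a non-colliding limit configuration (distinct $k,j$ cannot coincide there), so $\Theta_{(k,j)}\to D_{kj}>0$. In that regime each difference $\frac{\alpha_j+s}{1-\alpha_j s}-\frac{\alpha_k+s}{1-\alpha_k s}$ tends to the constant $\alpha_k^{-1}-\alpha_j^{-1}$, while the weighted term $(1+s^2)\big(\beta_j^2(1-\alpha_j s)^{-4}-\beta_k^2(1-\alpha_k s)^{-4}\big)$ tends to $0$; hence, after multiplication by $s^4$, the right-hand side of \eqref{eq:imaginary-part} converges to $\sum_{j\ne k}\frac{m_j\beta_j^2(\alpha_j-\alpha_k)^2}{D_{kj}^{3/2}\,\alpha_j^{6}\alpha_k^2}$, a sum of nonnegative terms and so $\ge 0$. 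Equating the two limits would force a strictly negative number to equal a nonnegative one, which is impossible. Since a parabolic cyclic solution was assumed to exist, no such solution can exist.

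I expect the main obstacle to be the bookkeeping in this last step: verifying that the competing powers of $s$ hidden in $\Theta_{(k,j)}$, in the squared difference, and in the $(1+s^2)$-weighted term really conspire to make the imaginary side exactly $O(s^{-4})$ with the clean nonnegative coefficient above, and simultaneously ruling out the degenerate escapes (some $\alpha_j=0$, which the generic time shift removes, or an accidental collision of the $s\to\infty$ configuration, which distinctness of the particles forbids). Once these leading-order asymptotics are pinned down, the strict positivity of $R$, the masses $m_j$, the heights $\beta_k$, and $\Theta_{(k,j)}$ closes the argument.
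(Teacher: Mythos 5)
Your proposal is correct in substance, and it takes a genuinely different route from the paper's own proof. The paper normalizes time \emph{dynamically}: it chooses the $k$-th particle as one of the last to reach the imaginary axis, so that $\alpha_k=0$ and $\alpha_j\ge 0$ at $s=0$; the real-part identity \eqref{eq:real-part} then forces $\alpha_j=0$ for all $j$ (all bodies on the imaginary axis simultaneously), and evaluating the imaginary part \eqref{eq:imaginary-part} at $s=0$ with all $\alpha_j=0$ produces a positive left side equal to a negative right side. You normalize in the opposite direction (a generic time shift making every $\alpha_k\neq 0$, legitimate since $u_k(t)=\tan(t+c_k)$ vanishes only on a discrete set) and extract the contradiction from the $s\to\infty$ asymptotics of the imaginary identity alone. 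Your computations check out: $s^4\times\mathrm{LHS}\to -R(1+\alpha_k^2)^2\alpha_k^4/(64\beta_k^2)<0$; the factorization $\Theta_{(k,j)}=4\bigl[(u_k-u_j)^2+(v_k-v_j)^2\bigr]\bigl[(u_k-u_j)^2+(v_k+v_j)^2\bigr]$ is correct; the limit configuration $w_l(s)\to -\alpha_l^{-1}+i\,\beta_l\alpha_l^{-2}$ is collision-free because $(\alpha,\beta)\mapsto(-\alpha^{-1},\beta\alpha^{-2})$ is injective, so $D_{kj}>0$; and $s^4\times\mathrm{RHS}$ converges to the nonnegative sum you display (including the degenerate case $\alpha_j=\alpha_k$, where the $j$-th term simply contributes $0$). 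Your route buys economy — one identity instead of two, no ``last particle'' selection, no intermediate collinearity step; the paper's route buys the safety of evaluating at $s=0$, a point certainly inside the solution's lifetime.

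That last remark is the one place you must patch, and it is a real (though repairable) soft spot. The two sides of \eqref{eq:imaginary-part} are \emph{not} rational in $s$ — they carry the factors $[\Theta_{(k,j)}(s)]^{3/2}$ — so ``rational functions agreeing on an interval agree identically'' does not literally apply; and the extension to large $s$ is genuinely needed, because the orbit itself blows up at finite $s$ (the $l$-th body escapes at $s=1/\alpha_l$, since $\dot u_l=1+u_l^2$), so $s\to\infty$ lies beyond the interval on which the dynamics hands you the identity. The repair is standard with ingredients you already have: the left side is rational with no real poles; on the right, $\Theta_{(k,j)}(s)>0$ for every $s\neq 1/\alpha_k,\,1/\alpha_j$ by your factorization together with injectivity of the time-$s$ map $(\alpha,\beta)\mapsto\bigl(\tfrac{\alpha+s}{1-\alpha s},\tfrac{\beta(1+s^2)}{(1-\alpha s)^2}\bigr)$, and each summand extends real-analytically across $s=1/\alpha_l$ because there $\Theta_{(k,j)}\sim 4v_l^4\sim c\,(1-\alpha_l s)^{-8}$, so $\Theta^{-3/2}$ contributes $(1-\alpha_l s)^{12}$ against at worst $(1-\alpha_l s)^{-8}$ from the prefactor and the bracket. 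Hence both sides are real-analytic on all of $\mathbb{R}$, the identity propagates from the solution's interval to all of $\mathbb{R}$ by the identity theorem for real-analytic functions, and your limit comparison — a strictly negative number equal to a nonnegative one — closes the proof. With that continuation argument spelled out, your proof is complete and, arguably, cleaner than the published one.
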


\begin{proof} Consider $n$ point particles with masses $m_1,\dots, m_n>0$, $n\ge 2$, moving in $\mathbb{H}^2_{R}$ 
under the influence of the hyperbolic cotangent potential with positions $w_k =w_k(t)$ for $k=1,\dots, n$ respectively,
and  satisfying the equations (\ref{eq:real-part}) and (\ref{eq:imaginary-part}). Since the cyclic parabolic flow (\ref{eq:eliptic-killing-vector-field-2})
carries the whole set of solutions to the right positive half plane as $t$ goes to infinity, without  loss of generality we can suppose that the $k$-th particle is one of the last reaching
the imaginary axis. If we take these configuration as initial conditions, then $\alpha_k =0$ and $\alpha_j \geq 0$ for $s=0$. Therefore, for these initial conditions equation (\ref{eq:real-part}) becomes
\begin{equation}\label{eq:real-part-vanish}
0 = \sum_{j \neq k} \frac{m_j \beta_j^2}{[\Theta_{(k,j)}(\alpha, \beta)]^{3/2}} \alpha_j,
\end{equation}
which implies necesarilly that $\alpha_j = 0$ for all $j=1,2, \cdots, n$.

This is, if one particle reaches the imaginary axis for some time, then the whole set of particles are sited also on the imaginary axis
at the same time. This implies that we can suppose, in the early, that all the particles are sited along the imaginary axis, and the isometric 
parametrizations are $w_j= s+ i (1+s^2) \beta_j$ and $w_k= s+ i (1+s^2) \beta_k$ respectively. If this is the case, then, for $s=0$ and
$\alpha_j=0$ in the imaginary parts (\ref{eq:imaginary-part}), we have that
\begin{equation}\label{eq:imaginary-part-contradiction}
 \frac{R}{64 \beta^2_k} = - \sum_{j \neq k} \frac{m_j \beta_j^2}{4[\beta_j^2- \beta_k^2]^2}. 
\end{equation}

The left hand side of equation (\ref{eq:imaginary-part-contradiction}) is 
positive whereas the right hand side is negative. This contradiction proves the claim and ends the proof.
\end{proof}

\subsection{Hyperbolic cyclic M\"{o}bius solutions}
\label{subsec:hyperbolic-cyclic-ic-relative eq}

In this subsection we study the M\"{o}bius solutions of \eqref{eq:motionkleingeo}
corresponding to the action of the Killing vector field
$ X_3$  associated to the differential equation (\ref{eq:Mobius-Klein-3a}) for the case when $e_1^2=1$.
It is not hard to see that in this case such that equation becomes into the one (see \cite{Kisil}),
\begin{equation}\label{eq:eliptic-killing-vector-field-3}
  \dot{w}= 1+w^2-\frac{(w-\bar{w})^2}{2}.
\end{equation}

\begin{Definition}
An  {\it hyperbolic cyclic  solution} for the $n$--body problem in $\mathbb{H}^2_R$, is a solution $w(t)=(w_1(t), w_2(t), \cdots, w_n(t))$ of the equations of motion (\ref{eq:motionkleingeo}) which is invariant under the subgroup $K$ associated to the vector field of equation (\ref{eq:eliptic-killing-vector-field-3}).
\end{Definition}

We obtain the condition for a solution of equation (\ref{eq:motionkleingeo}) to be invariant under
the Killing vector field (\ref{eq:eliptic-killing-vector-field-3}).

\begin{Lemma}\label{thm:existence-3} Consider $n$ point particles
with masses $m_1,\dots, m_n>0$, $n\ge 2$, moving in $\mathbb{H}^2_{R}$
with positions $w_k =w_k(t)$ for $k=1,\dots, n$ respectively. A necessary and sufficient condition for  the
function $w=(w_1,\dots, w_n)$ to be an hyperbolic cyclic solution of system \eqref{eq:motionkleingeo},
is that for all $k=1,\dots, n,$ the following algebraic functional equations 
\begin{eqnarray}
 \label{eq:rationalsystem-4}
&-& \frac{w_k^2+10|w_k|^2+2w_k|w_k|^2+2|w_k|^2\bar{w}_k^2-2\bar{w}_k^4+2w_k^4-3\bar{w}_k^2+4}{ 2(w_k-\bar{w}_k) } \nonumber \\
&=& \sum_{\substack{j=1\\ j\ne k}}^n \frac{m_j
   (\bar{w}_j-w_j)^2(w_k-w_j)(\bar{w}_j-w_k)}{[\Theta_{(k,j)}(w, \bar{w})]^{3/2}} \nonumber \\
\end{eqnarray}
are satisfied.
\end{Lemma}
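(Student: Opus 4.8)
The plan is to follow the same short recipe used for the parabolic cyclic case in Lemma \ref{thm:existence-2}: differentiate the defining equation \eqref{eq:eliptic-killing-vector-field-3} of the invariant flow once in time, and then substitute the resulting expressions for $\dot{w}_k$ and $\ddot{w}_k$ into the equations of motion \eqref{eq:motionkleingeo}, reading off the condition that remains. Concretely, I would begin from
\[ \dot{w}_k = 1 + w_k^2 - \frac{(w_k-\bar{w}_k)^2}{2} \]
and first record the one algebraic fact that makes the differentiation manageable: because $(w_k-\bar{w}_k)^2$ is real, conjugation gives $\dot{\bar{w}}_k = 1 + \bar{w}_k^2 - \tfrac{1}{2}(w_k-\bar{w}_k)^2$, so that the awkward quadratic term cancels in the difference and
\[ \dot{w}_k - \dot{\bar{w}}_k = w_k^2 - \bar{w}_k^2 = (w_k-\bar{w}_k)(w_k+\bar{w}_k). \]

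Differentiating once more and using this identity, I expect
\[ \ddot{w}_k = 2 w_k \dot{w}_k - (w_k-\bar{w}_k)^2(w_k+\bar{w}_k), \]
and substituting $\dot{w}_k$ here produces a closed expression for $\ddot{w}_k$ as a polynomial in $w_k,\bar{w}_k$ with no surviving time derivatives. The geometric (left) side of \eqref{eq:motionkleingeo} is the combination $\ddot{w}_k - 2\dot{w}_k^2/(w_k-\bar{w}_k)$; inserting the formulas for $\ddot{w}_k$ and for $\dot{w}_k^2$ and clearing the common factor $w_k-\bar{w}_k$ collapses this to a single rational function of $w_k,\bar{w}_k$ whose numerator is an explicit polynomial. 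Equating it to the gravitational (right) side of \eqref{eq:motionkleingeo} and isolating the interaction sum
\[ \sum_{\substack{j=1\\ j\ne k}}^n \frac{m_j(\bar{w}_j-w_j)^2(w_k-w_j)(\bar{w}_j-w_k)}{[\Theta_{(k,j)}(w,\bar{w})]^{3/2}} \]
then yields \eqref{eq:rationalsystem-4}, in complete parallel with the elliptic identity \eqref{eq:rationalsystem-1} and the parabolic identity \eqref{eq:rationalsystem-2}. Because the flow \eqref{eq:eliptic-killing-vector-field-3} is evaluated along the solution and every manipulation is a reversible identity, invariance under $K$ is equivalent to \eqref{eq:rationalsystem-4}, which delivers both necessity and sufficiency at once.

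The only real work is computational rather than conceptual: the hard part will be the faithful expansion and collection of the degree-four numerator on the left-hand side, where sign slips and mis-tracked powers of $(w_k-\bar{w}_k)$ are easy to introduce. To contain this, I would keep $(w_k-\bar{w}_k)$ and $(w_k+\bar{w}_k)$ as single symbols for as long as possible, substitute $|w_k|^2 = w_k\bar{w}_k$ only at the end, and cross-check the final polynomial against the two already-established cases \eqref{eq:rationalsystem-1} and \eqref{eq:rationalsystem-2}, which arise from the identical scheme with the coefficient of $(w_k-\bar{w}_k)^2$ in the Killing field replaced by $0$ and $\tfrac14$ respectively.
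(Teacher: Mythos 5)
Your proposal is exactly the paper's proof: the authors' entire argument is the one-line ``differentiate $\dot{w}_k = 1+w_k^2-\frac{(w_k-\bar{w}_k)^2}{2}$ and substitute into the equations of motion \eqref{eq:motionkleingeo}'', which is precisely the computation you spell out, including the key simplification that $(w_k-\bar{w}_k)^2$ is real, so $\dot{w}_k-\dot{\bar{w}}_k=(w_k-\bar{w}_k)(w_k+\bar{w}_k)$ and hence $\ddot{w}_k=2w_k\dot{w}_k-(w_k-\bar{w}_k)^2(w_k+\bar{w}_k)$. Your additional bookkeeping safeguards (keeping $w_k\pm\bar{w}_k$ symbolic and cross-checking the resulting polynomial against \eqref{eq:rationalsystem-1} and \eqref{eq:rationalsystem-2}) go beyond the paper's terse write-up but do not change the substance.
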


\begin{proof}
Result follows when we differentiate the equations $\displaystyle \dot{w}_k = 1+w^2_k-\frac{(w_k-\bar{w}_k)^2}{2},$ for
$ k=1,\dots,n$,  and
substitute in  the equations of motion \eqref{eq:motionkleingeo}. This completes the proof.
\end{proof}

Once again, we call {\it hyperbolic cyclic solutions} to  these solutions of system \eqref{eq:motionkleingeo} 
satisfying the system \eqref{eq:rationalsystem-4}.

As in the cyclic parabolic case, we observe that equation (\ref{eq:rationalsystem-4}) is also hard of solving, and again as there,
instead of trying this, we will find explicitly the solutions of the cyclic hyperbolic flow (\ref{eq:eliptic-killing-vector-field-3}) and we will propose these type of solutions.

In this case, for the real coordinates $(u,v)$ in $w= u + i \, v$, the complex differential equation (\ref{eq:eliptic-killing-vector-field-3}) can be written (see \cite{Kisil}) as the system of real differential equations
\begin{eqnarray}\label{eq:eliptic-killing-vector-field-2-real}
\dot{u} &=& 1+ u^2+ v^2, \nonumber \\
\dot{v} &=& 2 u v, \nonumber \\
\end{eqnarray}
which when is integrated for the initial conditions 
\begin{eqnarray}\label{initial-conditions-hyperbolic}
 u(0)+ v(0) &=&  \alpha, \nonumber \\
 u(0)-v(0)  &=&  \beta,   \nonumber \\
 \end{eqnarray}
has the  isometric parametrization
\begin{eqnarray}\label{eq:sol-eliptic-killing-vector-field-2-real}
u(t) &=& \frac{1}{2}\left[\frac{\tan t+\alpha}{1- \alpha \tan t}+ \frac{\tan t+ \beta}{1-\beta \tan t}\right], \nonumber \\
v(t) &=& \frac{1}{2}\left[\frac{\tan t+\alpha}{1-\alpha \tan t}- \frac{\tan t+ \beta}{1- \beta\tan t}\right]. \nonumber \\
\end{eqnarray}

In this way, by using equations (\ref{initial-conditions-hyperbolic}), the isometric parametrization of the solution of equation (\ref{eq:eliptic-killing-vector-field-3})
with initial condition $w(0)= u(0) +i v(0)$ is 
\begin{equation}\label{eq:complex-sol-eliptic-killing-vector-field-3-real}
w(t)= \frac{1}{2}\left[\frac{\tan t+\alpha}{1- \alpha \tan t}+ \frac{\tan t+ \beta}{1-\beta \tan t}\right]+
 \frac{i}{2}\left[\frac{\tan t+\alpha}{1-\alpha \tan t}- \frac{\tan t+ \beta}{1- \beta\tan t}\right].
\end{equation}

Let $s=\tan t$ be again the isometric new variable as before. Then 
the parametrization (\ref{eq:complex-sol-eliptic-killing-vector-field-3-real}) becomes
\begin{equation}\label{eq:one-complex-sol-eliptic-killing-vector-field-3-real-a}
w(s)= \frac{1}{2}\left[\frac{s+\alpha}{1-\alpha s}+ \frac{s+ \beta}{1-\beta s}+
i \left(\frac{s+\alpha}{1-\alpha s}- \frac{s+\beta}{1-\beta s}\right)\right].
\end{equation}

For the arbitrary index $l=1,2, \cdots, n$, we define the equalities,
\begin{eqnarray}\label{eq:letters}
A_l &=& A_l(s)= \frac{1}{2} \frac{\alpha_l +s}{1-\alpha_l s} \nonumber \\
B_l &=& B_l(s)= \frac{1}{2} \frac{\beta_l +s}{1-\beta_l s} \nonumber \\
C_l &=& C_l(s)= A_l+B_l \nonumber \\
D_l &=& D_l(s)= A_l- B_l, \nonumber \\
\end{eqnarray} 
and if we substitute equations (\ref{eq:one-complex-sol-eliptic-killing-vector-field-3-real-a}), (\ref{eq:derivatives-variable-isometric}) and (\ref{eq:rule-chain-variable-isometric}) together with the equations (\ref{eq:letters})  in equations (\ref{eq:rationalsystem-4}), 
for all $k, j=1,\dots, n,$ the respective real parts for $l=k$ become
\begin{eqnarray}\label{eq:real-part-hyperbolic}
&&(1+s^2)^2\left(\frac{\alpha_k(1+\alpha_k^2)}{(1-\alpha_k s)^3} + \frac{\beta_k(1+\beta_k^2)}{(1-\beta_k s)^3} \right) \nonumber \\
&+& s (1+s^2)\left(\frac{1+\alpha_k^2}{(1-\alpha_k s)^2} + \frac{\beta_k(1+\beta_k^2)}{(1-\beta_k s)^2} \right) \nonumber \\
&-& 4 \left[ \frac{(\alpha_k + \beta) +2 (1 -(\alpha_k + \beta_k))s - (\alpha_k + \beta_k)s^2}{(1+s^2)(1-\alpha_k s)(1-\beta_k s)} \right] \times  \nonumber \\
&\times &  \left[\frac{1+\alpha_k^2}{(1-\alpha_k s)^2} + \frac{1+\beta_k^2}{(1-\beta_k s)^2} \right] \nonumber \\
&=& \frac{4 (1+s^2)^3(\alpha_k - \beta_k)^3}{R} \sum_{j \neq k}  \left(\frac{\alpha_j +s}{1-\alpha_j \, s} - \frac{\beta_j +s}{1-\beta_j \, s} \right)^2 \, \frac{m_j D_k (C_j-C_k)}{[\Theta_{(k,j)}(\alpha, \beta)]^{3/2}}. \nonumber \\
\end{eqnarray}
On the other hand, the corresponding imaginary parts are
\begin{eqnarray}\label{eq:imaginary-part-hyperbolic}
&&(1+s^2)^2\left(\frac{\alpha_k(1+\alpha_k^2)}{(1-\alpha_k s)^3} - \frac{\beta_k(1+\beta_k^2)}{(1-\beta_k s)^3} \right)  \nonumber \\
&+& s (1+s^2)\left(\frac{1+\alpha_k^2}{(1-\alpha_k s)^2} - \frac{\beta_k(1+\beta_k^2)}{(1-\beta_k s)^2} \right) \nonumber \\
&+& \frac{8(1+\alpha_k^2)(1+\beta_k^2)}{(\alpha_k- \beta_k)(1+s^2)(1-\alpha_k s)(1-\beta_k s)}  \nonumber \\
&=& - \frac{2 (1+s^2)^3(\alpha_k - \beta_k)^3}{R} \sum_{j \neq k}  \left(\frac{\alpha_j +s}{1-\alpha_j \, s} - 
\frac{\beta_j +s}{1-\beta_j \, s} \right)^2 \times \nonumber \\ 
&\times& \frac{m_j [D_k^2 -D_j^2- (C_j-C_k)^2]}{[\Theta_{(k,j)}(\alpha, \beta)]^{3/2}}. \nonumber \\
\end{eqnarray}
Here, $ \Theta_{(k,j)}(\alpha, \beta)$ is again the corresponding evaluation  of the singular function (\ref{eq:singularklein}) in 
the $2n$--parametric vector $(\alpha, \beta)$.

\smallskip

We obtain the following result.

\begin{Theorem}\label{thm:no-existence-2} There are not hyperbolic cyclic solutions for the 
$n-$body problem in $\mathbb{H}_R^2$.
\end{Theorem}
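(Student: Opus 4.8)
**The plan is to mimic the structure of the parabolic cyclic non-existence proof (Theorem \ref{thm:no-existence-2} above), exploiting the same evaluation trick at a special configuration.** The strategy rests on two observations about the hyperbolic cyclic flow (\ref{eq:eliptic-killing-vector-field-3}): first, that the real flow $\dot u = 1+u^2+v^2$, $\dot v = 2uv$ drives all particles toward a distinguished limiting configuration, and second, that evaluating the coupled system (\ref{eq:real-part-hyperbolic})--(\ref{eq:imaginary-part-hyperbolic}) at an appropriate value of $s$ forces a sign contradiction. First I would identify, using the parametrization (\ref{eq:one-complex-sol-eliptic-killing-vector-field-3-real-a}) and the definitions (\ref{eq:letters}), a value of the isometric variable $s$ and a configuration of the parameters $(\alpha_k,\beta_k)$ at which the real-part equations (\ref{eq:real-part-hyperbolic}) collapse to a simple algebraic relation. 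In the parabolic case the choice $s=0$ with $\alpha_k=0$ was decisive; here the natural analogue is to send the $k$-th particle to the imaginary axis, which in the $(u,v)$ coordinates means $u_k=0$, i.e. $C_k=A_k+B_k=0$ at the chosen $s$.

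The second step is to show that this vanishing of the real part forces all particles onto the imaginary axis simultaneously, exactly as equation (\ref{eq:real-part-vanish}) did in the parabolic argument. Concretely, when the right-hand-side sum in (\ref{eq:real-part-hyperbolic}) is set to zero by the left-hand side collapsing, each summand carries a definite sign (the masses $m_j>0$, the factors $(\ldots)^2$ and $[\Theta_{(k,j)}]^{3/2}$ are positive, and the geometric factor $D_k(C_j-C_k)$ has a controllable sign under the ordering of the particles along the flow). I would argue that positivity of $\beta_j^2$-type weights together with the monotonicity of the flow toward the half-plane forces the remaining positional factors $C_j$ to vanish, so that the entire configuration lies on the imaginary axis at that instant. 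Having reduced to this one-dimensional configuration, the parametrizations simplify drastically — with $\alpha_l=0$ one gets $A_l=\tfrac12 s$ and $B_l=\tfrac12 s$ for all $l$, collapsing the complicated expressions in (\ref{eq:imaginary-part-hyperbolic}) to a manageable form.

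The third and decisive step is to substitute this reduced configuration into the imaginary-part equations (\ref{eq:imaginary-part-hyperbolic}) and extract a sign contradiction. By analogy with (\ref{eq:imaginary-part-contradiction}), I expect the left-hand side to reduce to a manifestly positive quantity of the form $R/(\text{positive})$, while the right-hand side, carrying the overall minus sign and a sum of positive mass-weighted terms, is manifestly negative (or vice versa). Since a positive quantity cannot equal a negative one, no hyperbolic cyclic solution can exist, which establishes the theorem.

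\textbf{The main obstacle} I anticipate is the sign analysis in the reduction step: the geometric factors $D_k(C_j-C_k)$ and the bracket $[D_k^2-D_j^2-(C_j-C_k)^2]$ in (\ref{eq:imaginary-part-hyperbolic}) are more intricate than their parabolic counterparts, because the hyperbolic flow mixes $\alpha_l$ and $\beta_l$ asymmetrically through the two separate M\"obius fractions in (\ref{eq:sol-eliptic-killing-vector-field-2-real}). Verifying that the right-hand side of (\ref{eq:real-part-hyperbolic}) has a fixed sign — so that its vanishing genuinely forces all $\alpha_j$ (equivalently all $C_j$) to coincide — requires care about whether a particle sent to the axis is the \emph{last} to reach it, just as the parabolic proof invoked that the flow "carries the whole set of solutions to the right positive half plane." I would resolve this by choosing the limiting instant and the index $k$ so that the ordering of the $C_j$ is controlled, guaranteeing that every summand shares the sign needed to conclude that the configuration degenerates onto the imaginary axis before deriving the final contradiction.
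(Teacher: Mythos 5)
Your outline follows the paper's proof step for step: evaluate at $s=0$ with the $k$-th particle chosen as one of the last to reach the imaginary axis, use the real-part equations (\ref{eq:real-part-hyperbolic}) to force $C_j(0)=0$ for all $j$ (so the whole configuration hits the axis simultaneously), then extract a sign contradiction from the imaginary parts (\ref{eq:imaginary-part-hyperbolic}), exactly as in (\ref{eq:imaginary-part-hyperbolic-contradiction}). However, your reduction step contains a concrete error that would derail the final substitution. In the hyperbolic case the initial conditions (\ref{initial-conditions-hyperbolic}) are $u(0)+v(0)=\alpha$, $u(0)-v(0)=\beta$, so ``particle on the imaginary axis'' translates to $\alpha_l=-\beta_l$, i.e.\ $C_l(0)=0$ with $D_l(0)=\alpha_l=-\beta_l>0$ remaining \emph{nonzero}; it does not translate to $\alpha_l=0$, which is the parabolic convention you are importing from (\ref{eq:complex-sol-eliptic-killing-vector-field-2-real}). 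Your claim that ``with $\alpha_l=0$ one gets $A_l=\tfrac12 s$ and $B_l=\tfrac12 s$ for all $l$'' is false on both counts: $B_l=\tfrac12(\beta_l+s)/(1-\beta_l s)$ equals $\tfrac12 s$ only if $\beta_l=0$, and $\alpha_l=\beta_l=0$ places the particle at $w=0$, on the boundary rather than in $\mathbb{H}^2_R$. Worse, if $A_l=B_l$ then $D_l=0$, and since the prefactors $\bigl(\tfrac{\alpha_j+s}{1-\alpha_j s}-\tfrac{\beta_j+s}{1-\beta_j s}\bigr)^2=4D_j^2$ multiply every summand, the entire right-hand side of (\ref{eq:imaginary-part-hyperbolic}) vanishes identically while factors of $(\alpha_k-\beta_k)$ in denominators blow up; your ``drastically simplified'' configuration is degenerate and yields no contradiction at all.

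The repair, which is what the paper actually does, is to keep the $D_l$ alive: after the real-part step gives $C_j(0)=0$ and $\alpha_j=-\beta_j>0$ for all $j$ (using $D_k(0)=-\beta_k>0$ and $C_j(0)\ge C_k(0)=0$ so that every term of (\ref{eq:real-part-hyperbolic-vanishes}) has one sign), one makes a \emph{second}, independent extremal choice of index: pick $k$ with $D_k$ maximal, so the bracket $D_k^2-D_j^2-(C_j-C_k)^2$ reduces to $D_k^2-D_j^2\ge 0$ and the right side of (\ref{eq:imaginary-part-hyperbolic}) is negative, while the left side collapses to $(\alpha_k-\beta_k)(1+\beta_k^2)+8(1+\alpha_k^2)(1+\beta_k^2)/(\alpha_k-\beta_k)$, manifestly positive since $\alpha_k-\beta_k=2D_k>0$. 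You correctly anticipated that the sign analysis of $D_k(C_j-C_k)$ and of the bracket is the crux, but you left both verifications as expectations and the simplification you proposed in their place would make them impossible; with the correct axis condition $C_l=0$, $D_l\neq 0$ and the two orderings made explicit, the argument closes as in the paper.
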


\begin{proof} The proof follows the same method as for Theorem \ref{thm:existence-2}. Regardless we give it here.
Consider again $n$ point particles with masses $m_1,\dots, m_n>0$, $n\ge 2$, moving in $\mathbb{H}^2_{R}$ under 
the influence of the hyperbolic cotangent potential and  satisfying the equations (\ref{eq:real-part-hyperbolic}) and (\ref{eq:imaginary-part-hyperbolic}). Since the cyclic hyperbolic flow also carries all these solutions to the right positive half plane as $t$ goes to infinity, without  loss of generality we can suppose that the $k$-th particle is one of the last reaching the imaginary axis, this is, $u_k (0) =0$. If we take these configuration as initial conditions, then  
for $s=0$ we have the values
\begin{eqnarray}\label{eq:values-hyperbolic}
A_k(0) &=& \frac{\alpha_k}{2}, \nonumber \\
B_k(0) &=& \frac{\beta_k}{2}, \nonumber \\
C_k(0) &=& \frac{\alpha_k + \beta_k}{2} = u_k (0) =0, \nonumber \\
D_k(0) &=& \frac{\alpha_k - \beta_k}{2} = v_k (0) = -\beta_k >0, \nonumber \\
\end{eqnarray}
which implies that $0 = C_k \leq C_j(0) = A_j(0) + B_j (0)$ for all $j \neq k$. 

Therefore, for these initial conditions the real equation (\ref{eq:real-part-hyperbolic}) becomes
\begin{equation}\label{eq:real-part-hyperbolic-vanishes}
0 = \sum_{j \neq k}  \left(\frac{\alpha_j +s}{1-\alpha_j \, s} - \frac{\beta_j +s}{1-\beta_j \, s} \right)^2 \, \frac{m_j }{[\Theta_{(k,j)}(\alpha, \beta)]^{3/2}} (-\beta_k) C_j,
\end{equation}
which implies that $C_j(0) = A_j(0) + B_j (0) = u_j(0)=0$ for all $j \neq k$, and therefore, if the $k$--particle
reaches the imaginary axis for some time, the whole set of particles also must reach the imaginary axis for this same time.
This also implies that $\alpha_j = - \beta_j >0$ for all $j=1,2, \cdots, n$.

On the other hand, if we consider again for $s=0$ the initial conditions $w_k(0) = i \beta_k$, $w_j(0) = i \beta_j$, 
such that $\beta_j \leq \beta_k$, then $C_j=C_k=0$ and $D_j \leq D_k$. If we substitute in the imaginary 
parts (\ref{eq:imaginary-part-hyperbolic}), then it follows that
\begin{eqnarray}\label{eq:imaginary-part-hyperbolic-contradiction}
&&(\alpha_k-\beta_k)(1+\beta_k^2)+ \frac{8(1+\alpha_k^2)(1+\beta_k^2)}{(\alpha_k- \beta_k)} \nonumber  \\
&=& - \frac{2(\alpha_k - \beta_k)^3}{R} \sum_{j \neq k}  (\alpha_j - \beta_j)^2 \, \frac{m_j[D_k^2 -D_j^2]}{[\Theta_{(k,j)}(\alpha, \beta)]^{3/2}}, \nonumber  \\
\end{eqnarray}
which is a contradiction, because again, the left hand side of this last equation is positive whereas that the right hand side is negative.

This contradiction proves the claim and ends the proof.
\end{proof}


\subsection{Conclusions}

Summarizing the results of this section, we obtain that the unique M\"{o}bius solutions for the $n-$body problem in 
$\mathbb{H}_R^2$ are either elliptic cyclic, normal hyperbolic or the compositions of these (loxodromic). 
This shows that for the whole study of the M\"{o}bius solutions, it should be sufficient considering the Cartan-Haussdorf decomposition $SL(2, \mathbb{R}) = KAK$, which uses only the subgroups A and K. Such last decomposition is connected with the geometry of the complex unit disk (see \cite{Fulton}).

In words of the classical mechanics (see \cite{DiacuPerez}) by Minding's Theorem, this can be paraphrased as:

\begin{Theorem}
The unique relative equilibria for the curved $n-$body problem in a two dimensional  space of negative constant Gaussian curvature are: the elliptic, homothetic or loxodromic solutions. 
\end{Theorem}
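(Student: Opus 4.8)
The plan is to assemble the final trichotomy directly from the case-by-case analysis of the preceding subsections, using Lemma \ref{prop:principal-subgroups} as the organizing principle. First I would invoke Minding's Theorem to reduce the statement for an arbitrary two-dimensional negative space form to the Klein half-plane model $\mathbb{H}^2_R$, where the equations of motion \eqref{eq:motionkleingeo} are already written down. The essential structural input is that the potential $V_R$ of \eqref{eq:potesfklein-0} is built solely from the hyperbolic distances $d_{kj}$ and is therefore invariant under the full proper isometry group $SL(2,\mathbb{R})/\{\pm I\}$; consequently the system \eqref{eq:motionkleingeo} is equivariant under this group, and the property of being a relative equilibrium in the sense of Definition \ref{def:relative-equilibria} is preserved when the invariance subgroup is conjugated by an isometry.

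Granting this equivariance, the reduction proceeds as follows. By Definition \ref{def:relative-equilibria} any M\"obius solution is invariant under some continuous one-parameter subgroup $B(t)\subset SL(2,\mathbb{R})$, and by Lemma \ref{prop:principal-subgroups} such a subgroup is conjugate to exactly one of $A$, $N$ or $K$. Applying the conjugating isometry and using equivariance, I may assume the solution is invariant under $A$, $N$ or $K$ itself, so it suffices to collect the outcomes of the three canonical cases. For $A$ (the homothetic subgroup, \eqref{eq:Mobius-Klein-1a}) the existence of normal hyperbolic solutions is recorded through condition \eqref{eq:condrationalsystemklein1a} and \cite{Diacu8}. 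For $N$ (the nilpotent shift, \eqref{eq:Mobius-Klein-2a}) condition \eqref{eq:condrationalsystemklein2b} has no solutions, again by \cite{Diacu8}. For the compact subgroup $K$ I would split into its three geometric realizations through the Clifford algebras $C\ell(e)$, $C\ell(p)$, $C\ell(h)$: the elliptic realization \eqref{eq:eliptic-killing-vector-field} yields genuine elliptic cyclic solutions via condition \eqref{eq:rationalsystem-1} and \cite{Diacu8}, whereas the parabolic realization \eqref{eq:eliptic-killing-vector-field-2} and the hyperbolic realization \eqref{eq:eliptic-killing-vector-field-3} each force a sign contradiction between the two sides of the reduced imaginary equations, so that neither parabolic cyclic nor hyperbolic cyclic solutions exist, by the two non-existence results established above.

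Combining these outcomes, the only one-parameter-invariant M\"obius solutions are the homothetic ones arising from $A$ and the elliptic cyclic ones arising from $K$ in its elliptic realization. I would then account for the loxodromic solutions as the superpositions obtained by combining the $A$ and $K$ factors: since by the Cartan--Hausdorff decomposition $SL(2,\mathbb{R})=KAK$ every isometry is generated by these two subgroups, the composite spiral motions exhaust all remaining relative equilibria and give precisely the loxodromic type. Translating back through Minding's Theorem to an arbitrary negative space form then yields the asserted classification into elliptic, homothetic and loxodromic solutions.

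The step I expect to be the main obstacle is the equivariance-plus-conjugation reduction, that is, justifying rigorously that conjugating the invariance subgroup by an isometry carries one relative equilibrium to another, so that the three canonical subgroups genuinely suffice. Closely related is the need to argue that the three Clifford-algebra realizations of $K$ truly exhaust the geometrically distinct elliptic-type cyclic motions, and that the loxodromic compositions do not silently introduce a new class of relative equilibria beyond the superposition of the two surviving one-parameter types; pinning down precisely what \emph{composition} means dynamically, and verifying that such a composite still solves \eqref{eq:motionkleingeo}, is where the argument will require the most care.
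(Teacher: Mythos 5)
Your proposal follows essentially the same route as the paper: the paper's own justification of this theorem is exactly the summary of Section~\ref{sec:Klein_model} --- Lemma~\ref{prop:principal-subgroups} reduces any invariance subgroup, up to conjugacy, to $A$, $N$ or $K$; the subsection case analyses then give existence for the homothetic ($A$) and elliptic cyclic ($K$, elliptic realization) types and non-existence for the nilpotent, parabolic cyclic and hyperbolic cyclic types; and Minding's Theorem together with the $KAK$ (loxodromic) remark yields the stated classification. The equivariance-plus-conjugation step and the dynamical meaning of the loxodromic ``compositions'' that you flag as delicate are likewise left implicit in the paper, so your reconstruction matches its argument in both substance and level of rigor.
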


\subsection*{Acknowledgments}
The authors acknowledge to professors, Alexander (Sasha) Sinitsyn from Universidad Nacional de Colombia for his  helpful comments throughout the preparation of this work, and Antonio Hern\'andez from UAM-Iztapalapa, M\'exico, for his  help in correcting the literary part of this paper. On the other hand, the second author J. Guadalupe Reyes Victoria acknowledges also the total support received from the UNICARTAGENA, Colombia,  for the realization of this paper.



\begin{thebibliography}{10}
 
 \bibitem{Abraham} Abraham, R., Marsden, J., Foundation of Mechanics, 2nd edition, Benjamin/Cummings Pub. Co. Inc.,
 Ad. Book Program, Reading, Mass., (1978). 

\bibitem{Diacu1} F.\ Diacu, Polygonal homographic orbits of the curved $n$-body problem, {\it Trans.\ Amer.\ Math.\ Soc.} {\bf 364} (2012), 2783-2802.

\bibitem{DiacuPerez} F.\ Diacu and E.\ P\'erez-Chavela, Homographic solutions of the curved 3-body problem, {\it J.\ Differential Equations} {\bf 250} (2011), 340-366.

\bibitem{Diac} F.\ Diacu, E.\ P\'erez-Chavela and M.\ Santoprete,
The n-body problem in spaces of constant curvature, arXiv:0807.1747, 54 pages.

\bibitem{Diacu8} Diacu, F., P\'erez-Chavela, E., Reyes, J.G., An intrinsic approach in
the curved $n$--body problem. The negative case. {\it Journal of Differential Equations}, {\bf 252}, 4529-4562, (2012).

\bibitem{DoCarmo} M.\ Do Carmo, {\it Riemannian Geometry}, Birkhauser 1992. 

\bibitem{Dub} B.\ Dubrovin, A.\ Fomenko, and P.\ Novikov, {\it Modern Geometry,
Methods and Applications}, Vol. I, Springer-Verlag, New York, 1984.

\bibitem{Dulov} E. Dulov, A. Sinitsyn, V. Vedenyapin, Kinetic Boltzmann, Vlasov and related equations, {\it Elsevier, USA, 2011}.

\bibitem{Einstein} Einstein, A., Rosen, N., The particle problem in the general theory of relativity, Physical Review, 48, (1935)

\bibitem{Fulton} Fulton, W., Harris, J., Representation Theory, A first course, Graduate Texts in Mathematics. Readings in Mathematics, 
Springer-Verlag, v. 129, New York, (1991).

\bibitem{Husemuller} Husemoller, D.,  Joachim,  M.,  Jurco, B.,  Schottenloher,  M., Gram-Schmidt Process, Iwasawa Decomposition,
 and Reduction of Structure in Principal Bundles, Basic Bundle Theory and K-Cohomology Invariants,
Lecture Notes in Physics Volume 726, pp 189-201, 2008.

\bibitem{Iwa} Iwasawa, K., On some types of topological groups, Ann. of Math., 50,  pp. 507-558, 1949.

\bibitem{Kisil}  V. Kisil, Erlangen Program at Large-1: Geometry of invariants, {\bf SIGMA} 6, 2010.

\bibitem{Kisil2}  V. Kisil, The Geometry of M\"{o}bius Transformations, Elliptic, Parabolic and Hyperbolic Actions of $SL(2,\mathbb{R})$,
Imperial College Press, UK, (2012).

\bibitem{Kozlov} Kozlov, V., O-Harin, A., Kepler's Problem in constant curvature spaces, Cel. Mech. Dynam. Astronom., 54 (1992), 393-399.

\bibitem{Perez} P\'erez-Chavela, E., Reyes-Victoria, J.G.,
An intrinsic approach in the curved $n$--body problem. The positive
curvature case. {\it Transactions of the American Math Society}, {\bf 364}, 3805-3827, (2012).

\bibitem{Perez-Reyes} E. Perez-Chavela and J. Guadalupe Reyes-Victoria, M\"{o}bius solutions of the curved $n$--body problem for positive  curvature,  arXiv:1207.0737, 2012.

\end{thebibliography}
\end{document}